\newtheorem{thm}{Theorem}[section]
\newtheorem{proposition}[thm]{Proposition}
\newtheorem{theorem}[thm]{Theorem}
\newtheorem{definition}[thm]{Definition}
\newtheorem{lemma}[thm]{Lemma}
\newtheorem{corollary}[thm]{Corollary}
\theoremstyle{definition}
\newenvironment{example}
  {\pushQED{\qed}\examplex}
  {\popQED\endexamplex}
\newcommand{\id}{\text{Id}}
\newcommand{\cech}{\v{C}ech }
\newcommand{\unp}{\text{Unp}}
\title{Determining homology of an unknown space from a sample}
\author[1]{Morten Brun}
\author[1]{Belén García Pascual}
\author[1]{Lars M. Salbu}
\date{}
\affil[1]{Department of Mathematics, University of Bergen.}
\begin{document}
\maketitle
\begin{abstract}
  The homology of an unknown subspace of Euclidean space can be determined from the intrinsic \cech complex of a sample of points in the subspace, without reference to the ambient Euclidean space. More precisely, given a subspace $X$ of Euclidean space and a sample $A$ of points in $X$, we give conditions for the homology of $X$ to be isomorphic to a certain persistent homology group of the intrinsic \cech complex.
  
   %We show that the homology of an unknown subspace of Euclidean space can be determined from the intrinsic \cech complex of a sample of points in the subspace, without reference to the ambient Euclidean space. More precisely, given a subspace $X$ of Euclidean space and a sample $A$ of points in $X$, we give conditions implying that the homology of $X$ is isomorphic to a certain persistent homology group of the intrinsic \cech complex.
\end{abstract}

	{Key words: TDA, Interleavings, Persistence, \cech complex, Reconstruction \par}

\section{Introduction}
In the last decades, Topological Data Analysis (TDA) has emerged as an active field where mathematicians and scientists study the shape of data with the aim of accessing new information. For example, spaces of conformations of proteins have been successfully examined through persistent homology (\cite[IX.2]{edelsbrunner}, \cite{Bio}) and the layers of neural networks can be described using persistent homology of image patches \cite{CarlssonMachineLearning}. A general problem in TDA is to determine topological properties of a space from a sample of points in the space. 
We follow the line of research finding assumptions on the sample and the space allowing to determine the homology of the space. In order to do that, we study reconstruction results in the form of finding a simplicial complex that is homotopy equivalent to the space. 

There are many ways of constructing topological spaces from data, including the Vietoris-Rips complex \cite[p.61]{edelsbrunner}, the $\alpha$-complex \cite[p.70]{edelsbrunner} and the \cech complex \cite[p.60]{edelsbrunner}. Given a metric space $M = (M, d)$ we focus on variations of the \cech complex. If \(X\) and \(Y\) are subspaces of \(M\) and \(\alpha > 0\), the (generalized) \cech complex $\mathscr{C}_Y(X,\alpha)$ consists of the finite subsets $\sigma\subseteq X$ so that the intersection of Y and the open balls in $M$ with radius \(\alpha\) and centers in $\sigma$ is non-empty.

Given a metric space \(M\), 
the problem we consider is to determine topological features of a subspace $X$ of $M$ from a known sample
$A$ of points in $X$
and some geometric assumptions. One geometric assumption on the space $X$ is that it is embedded in an Euclidean space with positive reach.
In a sense, the reach describes how curved the space is. In particular, Riemannian manifolds embedded in an Euclidean space have positive reach (see Federer \cite[Sec. 4]{Federer}). A geometric assumption on the sample $A$ is that the directed Hausdorff distance $\overrightarrow{d_H}(X,A)$ is small. Intuitively this distance is the furthest away in $X$ you can go from $A$, so it is small when the sample is sufficiently dense.

The two assumptions described in the previous paragraph have been used with success. Perhaps the best known example is the result by Niyogi, Smale and Weinberg \cite{Smale}, reconstructing a subspace $X$ of $\mathbb{R}^n$ from the ambient \cech complex $\mathscr{C}_{\mathbb{R}^{n}}(A, \alpha)$:
\theoremstyle{plain}
\newtheorem*{well-known proposition}{Theorem \ref{well-known proposition}}
\begin{well-known proposition}[{\cite[Prop. 3.1]{Smale}}]
	For $X$ a compact submanifold in $\mathbb{R}^{n}$ with positive reach $\tau$, and $A$ a finite subspace in $\mathbb{R}^{n}$ such that $d:=\overrightarrow{d_{H}}(X,A)<\sqrt{\frac{3}{20}}\thinspace \thinspace \tau$, then for all $\alpha \in (2d, \sqrt{\frac{3}{5}} \thinspace \thinspace \tau)$, the geometric realization of the \cech complex $\mathscr{C}_{\mathbb{R}^{n}}(A, \alpha)$ is homotopy equivalent to $X$.
\end{well-known proposition}

A more recent reconstruction result stated by both Kim et al. \cite{Wasserman} and García Pascual \cite{Belen} is the following:

\newtheorem*{mythm}{Theorem \ref{mythm}}
\begin{mythm}[{\cite[Cor.10]{Wasserman}}, {\cite[Thm. 2.16]{Belen}}]
	Let $X \subseteq \mathbb{R}^{n}$ have positive reach $\tau$, and let $A\subseteq \mathbb{R}^{n}$. If $\alpha \in \left(\overrightarrow{d_{H}}(X,A), \tau\right]$, then the geometric realization of $\mathscr{C}_{X}(A, \alpha)$ is homotopy equivalent to $X$.
\end{mythm}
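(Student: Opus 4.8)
The plan is to exhibit $\mathscr{C}_X(A,\alpha)$ as the nerve of a good open cover of $X$ and invoke the Nerve Theorem. Consider the family $\mathcal{U}=\{U_a\}_{a\in A}$ with $U_a:=X\cap B(a,\alpha)$, where $B(a,\alpha)$ denotes the open ball of radius $\alpha$ in $\mathbb{R}^n$. By the definition of the generalized \cech complex, a finite set $\sigma=\{a_0,\dots,a_k\}\subseteq A$ is a simplex of $\mathscr{C}_X(A,\alpha)$ precisely when $U_{a_0}\cap\cdots\cap U_{a_k}\neq\emptyset$, so $\mathscr{C}_X(A,\alpha)$ is the nerve of $\mathcal{U}$. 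Each $U_a$ is open in $X$, and $\mathcal{U}$ covers $X$: for $x\in X$ we have $\inf_{a\in A}\|x-a\|\le\overrightarrow{d_H}(X,A)<\alpha$, so some $a\in A$ satisfies $\|x-a\|<\alpha$, i.e.\ $x\in U_a$. Since $X$, being a subspace of $\mathbb{R}^n$, is metrizable and hence paracompact, the Nerve Theorem applies once $\mathcal{U}$ is known to be a good cover, and it then produces a homotopy equivalence between the geometric realization of $\mathscr{C}_X(A,\alpha)$ and $X$. (When $A$ is finite this is the classical finite Nerve Lemma.)

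So it remains to prove that $\mathcal{U}$ is a good cover, that is, that every nonempty finite intersection $U_{a_0}\cap\cdots\cap U_{a_k}=X\cap C$ is contractible, where $C:=\bigcap_{i=0}^{k}B(a_i,\alpha)$. The set $C$ is open and convex, and it is contained in the single ball $B(a_0,\alpha)$ of radius $\alpha\le\tau$. Hence it suffices to establish the geometric statement: if $X\subseteq\mathbb{R}^n$ has reach $\tau$ and $C$ is a convex set contained in an open ball of radius $\rho\le\tau$ with $C\cap X\neq\emptyset$, then $C\cap X$ is contractible. I would prove this using the metric projection $\pi_X$ onto $X$, which by Federer's theory of sets of positive reach is well defined and continuous on the open tube $V:=\{y\in\mathbb{R}^n:d(y,X)<\tau\}$. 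Fix $B(z,\rho)\supseteq C$ with $\rho\le\tau$. Since $C\cap X\neq\emptyset$ we get $d(z,X)<\rho\le\tau$, so $z\in V$ and $p:=\pi_X(z)$ satisfies $\|p-z\|=d(z,X)<\rho$, whence $p\in B(z,\rho)$ too. For $x\in C\cap X$ and $t\in[0,1]$, the point $(1-t)x+tp$ lies on the segment $[x,p]\subseteq B(z,\rho)$ and, being a convex combination of $x,p\in X$, satisfies $d\big((1-t)x+tp,\,X\big)\le\min(t,1-t)\,\|x-p\|\le\tfrac12\|x-p\|<\rho\le\tau$, so it lies in $V$. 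Therefore $H(x,t):=\pi_X\big((1-t)x+tp\big)$ is a continuous map $(C\cap X)\times[0,1]\to X$ with $H(\cdot,0)=\mathrm{id}_{C\cap X}$ and $H(\cdot,1)\equiv p$, and it contracts $C\cap X$ onto $p$ as soon as one knows it never leaves $C$ — equivalently, that $\pi_X$ maps $V\cap C$ into $C$, and in particular $p=\pi_X(z)\in C$.

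The one remaining point, which I expect to be the main obstacle, is exactly this invariance. Since $C=\bigcap_i B(a_i,\alpha)$, it reduces to showing that for each center $a_i$, if $w\in B(a_i,\alpha)\cap V$ and $B(a_i,\alpha)\cap X\neq\emptyset$, then $\pi_X(w)\in B(a_i,\alpha)$; this is where the hypothesis $\alpha\le\tau$ is genuinely used. Writing $q=\pi_X(w)$ and $\nu=(w-q)/\|w-q\|$, the defining property of reach gives that the open ball $B(q+\tau\nu,\tau)$ is disjoint from $X$; comparing this ball with $B(a_i,\alpha)$, which contains $w$ and at least one point of $X$, a short computation with the resulting inequalities forces $\|q-a_i\|<\alpha$. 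Granting this, $H$ is the desired contraction.

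An alternative route for the geometric statement, with the same substance, is to bypass the explicit deformation and invoke the Morse theory of the distance-like function $x\mapsto\max_i\|x-a_i\|$ restricted to a set of positive reach: under $\alpha\le\tau$ this function has no critical values on $(0,\alpha)$ other than a single minimum, so its open sublevel set $\{x\in X:\max_i\|x-a_i\|<\alpha\}=C\cap X$ deformation retracts onto that minimizer and is contractible. Either way, once the good-cover property is in place, the Nerve Theorem gives the homotopy equivalence $|\mathscr{C}_X(A,\alpha)|\simeq X$, completing the proof.
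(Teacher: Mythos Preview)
Your overall strategy---identifying $\mathscr{C}_X(A,\alpha)$ with the nerve of the cover $\{X\cap B(a,\alpha)\}_{a\in A}$ and invoking the Nerve Theorem once this cover is shown to be good---is exactly what the paper indicates (it does not spell out a proof but points to the Nerve Theorem and to \cite{Wasserman,Belen} for the verification that the balls form a good cover of $X$).

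The gap is in your contractibility argument. The invariance you reduce to, namely that $\pi_X$ sends $V\cap B(a_i,\alpha)$ into $B(a_i,\alpha)$ whenever $B(a_i,\alpha)\cap X\neq\emptyset$, is false. Take $X$ to be the $x$-axis in $\mathbb{R}^2$ (so $\tau=\infty$), $a_i=(0,\tfrac12)$, $\alpha=1$, and $w=(\tfrac{9}{10},\tfrac12)$: then $\|w-a_i\|=\tfrac{9}{10}<1$ and $B(a_i,1)$ meets $X$, yet $\pi_X(w)=(\tfrac{9}{10},0)$ has $\|\pi_X(w)-a_i\|=\sqrt{\tfrac{81}{100}+\tfrac14}>1$. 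The tangent-ball comparison you sketch cannot exclude this configuration, so the ``short computation'' does not exist as stated. There is a second, smaller issue upstream: your base point $p=\pi_X(z)$ is built from the center $z$ of an enclosing ball $B(z,\rho)\supseteq C$, and you give no reason for $z\in C$; hence even if $\pi_X(V\cap C)\subseteq C$ held, it would not place $p$ in $C$. The good-cover step is genuinely the hard part of this theorem and requires more careful geometry than a one-line $\pi_X$-invariance of balls; the cited sources carry this out, and your alternative Morse-theoretic route is closer in spirit to what is actually needed.
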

In Theorem \ref{mythm} the radius $\alpha$ is increased compared to the radius in Theorem \ref{well-known proposition}, but the \cech complex in Theorem \ref{mythm} depends on the unknown space $X$ and it is therefore difficult to determine without further information. 

We are going to compare \cech complexes of various combinations of ambient metric space $M$, unknown subspace $X$ of $M$ and known sample $A$ of points in $X$. As it turns out, some of these simplicial complexes are homotopy equivalent by Dowker's Theorem, others are included in each other and some are interleaved. In particular, we use interleavings together with the reconstruction result in Theorem \ref{mythm} to replace the ambient Euclidean space $\mathbb{R}^n$ in Theorem \ref{well-known proposition} by the known sample $A$, forming the intrinsic \cech complex $\mathscr{C}_A(A,\alpha)$. This is our main result:

\newtheorem*{mainthm}{Theorem \ref{mainthm}}
\begin{mainthm}
Let $X\subseteq\mathbb{R}^n$ with positive reach $\tau$, let $A\subseteq X$ such that $d:=\overrightarrow{d_{H}}(X,A)<\tau/3$. If $\alpha\in(2d,\tau-d)$, then for any $k\geq 0$ and any $\epsilon\in(d,\tau-\alpha]$ the $k$-th homology group $H_k(X)$ is isomorphic to the $k$-th persistent homology group Im$\,\phi_k^{\alpha,\alpha+\epsilon}$ where 
    \begin{equation*}
        \phi_k^{\alpha,\alpha+\epsilon}:H_k(\mathscr{C}_A(A,\alpha))\to H_k(\mathscr{C}_A(A,\alpha+\epsilon))
    \end{equation*}
is the map induced by the inclusion $\mathscr{C}_A(A,\alpha)\hookrightarrow\mathscr{C}_A(A,\alpha+\epsilon)$.
\end{mainthm}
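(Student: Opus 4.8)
The plan is to interpose, between the intrinsic \cech complexes $\mathscr{C}_A(A,\alpha)$, the ``semi-intrinsic'' complexes $\mathscr{C}_X(A,\alpha)$ to which Theorem~\ref{mythm} applies, and to exploit that the two families are interleaved. Write $U_\alpha:=\mathscr{C}_A(A,\alpha)$ and $V_\alpha:=\mathscr{C}_X(A,\alpha)$; both are subcomplexes of the full simplex on the vertex set $A$, and enlarging the radius gives subcomplex inclusions. First I would record, for every $\gamma>0$, the inclusions
\begin{equation*}
U_\gamma\ \subseteq\ V_\gamma\ \subseteq\ U_{\gamma+d}.
\end{equation*}
The first is immediate from $A\subseteq X$. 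For the second: if $\sigma\in V_\gamma$ then some $x\in X$ satisfies $\|a-x\|<\gamma$ for all $a\in\sigma$, and since $\overrightarrow{d_H}(X,A)=d$ and $\sigma$ is finite one may choose $a_0\in A$ with $\|x-a_0\|$ close enough to its infimal value $\le d$ that $\|a-a_0\|<\gamma+d$ for every $a\in\sigma$; then $a_0$ witnesses $\sigma\in U_{\gamma+d}$. Together with monotonicity of both families in the radius this yields in particular
\begin{equation*}
V_{\alpha-d}\ \subseteq\ U_\alpha\ \subseteq\ V_\alpha\ \subseteq\ U_{\alpha+\epsilon}\ \subseteq\ V_{\alpha+\epsilon},
\end{equation*}
where $V_\alpha\subseteq U_{\alpha+\epsilon}$ uses $V_\alpha\subseteq U_{\alpha+d}\subseteq U_{\alpha+\epsilon}$ and $\epsilon>d$.

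Next I would invoke Theorem~\ref{mythm} in functorial form: for $d<\gamma\le\gamma'\le\tau$ the inclusion $V_\gamma\hookrightarrow V_{\gamma'}$ induces an \emph{isomorphism} $H_k(V_\gamma)\xrightarrow{\ \cong\ }H_k(V_{\gamma'})$, both groups being identified with $H_k(X)$ compatibly. The hypotheses place the three radii $\alpha-d,\ \alpha,\ \alpha+\epsilon$ all in $(d,\tau]$: indeed $\alpha>2d$ gives $\alpha-d>d$, while $\epsilon\le\tau-\alpha$ gives $\alpha+\epsilon\le\tau$, and $\alpha+\epsilon>\alpha>d$ (the conditions $d<\tau/3$ and $\alpha<\tau-d$ are exactly what makes the intervals for $\alpha$ and $\epsilon$ nonempty). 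Hence $H_k(V_{\alpha-d})\to H_k(V_\alpha)$ and $H_k(V_\alpha)\to H_k(V_{\alpha+\epsilon})$ are both isomorphisms.

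The conclusion is then a short diagram chase on $H_k$. From $V_{\alpha-d}\subseteq U_\alpha\subseteq V_\alpha$, since the composite $H_k(V_{\alpha-d})\to H_k(V_\alpha)$ is an isomorphism, the map $\iota\colon H_k(U_\alpha)\to H_k(V_\alpha)$ is surjective. From $V_\alpha\subseteq U_{\alpha+\epsilon}\subseteq V_{\alpha+\epsilon}$, since the composite $H_k(V_\alpha)\to H_k(V_{\alpha+\epsilon})$ is an isomorphism, the map $\psi\colon H_k(V_\alpha)\to H_k(U_{\alpha+\epsilon})$ is injective, so $\operatorname{Im}\psi\cong H_k(V_\alpha)\cong H_k(X)$. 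Finally the simplicial inclusion $U_\alpha\hookrightarrow U_{\alpha+\epsilon}$ factors as $U_\alpha\subseteq V_\alpha\subseteq U_{\alpha+\epsilon}$, so $\phi_k^{\alpha,\alpha+\epsilon}=\psi\circ\iota$; surjectivity of $\iota$ gives $\operatorname{Im}\phi_k^{\alpha,\alpha+\epsilon}=\psi(H_k(V_\alpha))=\operatorname{Im}\psi\cong H_k(X)$, as claimed.

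I expect the one substantive point to be the functoriality used in the second paragraph: Theorem~\ref{mythm} as quoted gives only a homotopy equivalence $|V_\gamma|\simeq X$ for each fixed $\gamma$, and one must check that these equivalences can be chosen compatibly with the inclusions $V_\gamma\hookrightarrow V_{\gamma'}$ — equivalently, that $|V_\gamma|\hookrightarrow|V_{\gamma'}|$ is itself a homotopy equivalence. This should come out of the proof of Theorem~\ref{mythm} (Dowker duality together with a nerve-lemma argument for the cover of $X$ by the sets $B(a,\gamma)\cap X$) run in its natural, ``persistent'' form, since a morphism of covers of the fixed space $X$ induces a map of nerves commuting up to homotopy with the nerve-lemma equivalences. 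It then remains only to dispose of the degenerate case $d=0$ separately, where $U_\alpha=V_\alpha$ and $\phi_k^{\alpha,\alpha+\epsilon}$ is directly an isomorphism.
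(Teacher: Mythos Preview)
Your argument is correct and follows essentially the same route as the paper: interpose the complexes $\mathscr{C}_X(A,\cdot)$ between the intrinsic complexes $\mathscr{C}_A(A,\cdot)$ via the $(0,d)$-interleaving, invoke Theorem~\ref{mythm} in functorial form at three radii, and read off $\operatorname{Im}\phi_k^{\alpha,\alpha+\epsilon}\cong H_k(X)$ from the resulting injectivity/surjectivity. The only cosmetic differences are that the paper uses radii $\alpha',\alpha'+\epsilon',\alpha'+\epsilon'+\epsilon''$ and then renames, and that it cites the functorial Nerve theorem \cite[Thm.~5.4]{Virk_2021} for exactly the compatibility you flag in your last paragraph.
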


Ideas in this line of linking reconstruction and interleavings have appeared before in Chazal et al. \cite[Thm 3.7 and Sec 5]{persistreconstruct}.

The \cech complexes obtained when we have a sample $A$ in an unknown subspace $X$ of a metric space $M$ fit into a commutative diagram presented in Proposition \ref{diagram 1}. We study how similar these \cech complexes are to each other. We therefore apply Dowker's Theorem \cite[Theorem 1]{Dowker}, presented here as Corollary \ref{Equiv}, to find homotopy equivalences between some of these \cech complexes. We then establish interleavings between some of the \cech complexes in this diagram. In essence, on the level of homology, we reverse some of the arrows of the diagram by increasing the radii of the \cech complexes using the directed Hausdorff distance between $X$ and $A$, giving the interleavings of Corollary \ref{cor directed hausdorff}. Ultimately, we find that the $k$-th homology group of $X$ is isomorphic to the $k$-th persistent homology group Im$\,\phi_k^{\alpha,\alpha+\epsilon}$ where $\phi_k^{\alpha,\alpha+\epsilon}:H_k(\mathscr{C}_A(A,\alpha))\to H_k(\mathscr{C}_A(A,\alpha+\epsilon))$ is as presented above in Theorem \ref{mainthm}. 

\subsection {Outline}

In Section 2 we introduce (filtered) simplicial complexes, simplicial maps, \cech complexes and commutative diagrams of inclusions of \cech complexes (Proposition \ref{diagram 1}). In Section 3 we present Dowker's Theorem for \cech complexes (Theorem \ref{DowkerCech}), the homotopy equivalences between the different \cech complexes (Corollary \ref{Equiv}) and the induced isomorphisms between their homology groups (Corollary \ref{diagram 1 hom}). In Section 4 we introduce interleavings of persistence groups and contiguous maps. In Section 5 we obtain $(0,\epsilon)$-interleavings for $\epsilon$ greater than the directed Hausdorff distance between $X$ and $A$ (Corollary \ref{cor directed hausdorff}). In Section 6 we define the reach, we look at previous geometric reconstruction results (Theorem \ref{well-known proposition} and Theorem \ref{mythm}) and we give the main result of this paper for determining the homology of the unknown space $X$ (Theorem \ref{mainthm}). In Section 7 we look at how two concrete examples perform under the results presented throughout this paper. In Section 8 we give our conclusions and further research avenues.

\section{\cech complexes}
The main tool we use to find topological features of some unknown space by looking at a known sample, is the \cech complex. In this section we give the basic preliminaries of simplicial complexes and \cech complexes, and show that different kinds of \cech complexes fit into a diagram of inclusions. 

An \textbf{(abstract) simplicial complex} is a set $K$ of finite sets that are closed under inclusions, i.e. if $\sigma\in K$ and $\tau\subseteq \sigma$ then $\tau \in K$. An element of $K$ is called a \textbf{simplex}, and the union of all simplices $V(K)=\cup_{\sigma\in K}\sigma$ is called the \textbf{vertex set} of $K$. For two simplicial complexes $K$ and $K'$, a \textbf{simplicial map} $f:K\to K'$ is a map on the vertex sets $f:V(K)\to V(K')$ sending simplices to simplices, namely such that $\sigma\in K$ implies $f(\sigma):=\{f(s)\}_{s\in\sigma}\in K'$.

We denote the \textbf{geometric realization} of an abstract simplicial complex $K$ by $|K|$ and write $|F|:|K_{1}|\to|K_{2}|$ for the continuous map constructed from a given simplicial map $F:K_{1}\rightarrow K_{2}$ (see for example Spanier \cite[3.2.14 and 3.2.21]{Spanier} for more details). Using the customary convention we write the $k$-th homology group of the realization $|K|$ without the bars, namely as $H_k(K)$. 

Let $(M,d)$ be a metric space. Below we define a (generalized) \cech complex for two arbitrary subspaces of $M$. We follow the definition given by Chazal et al. {\cite[Sec. 4.2.3]{chazal}}, with a slight change in terminology.

\begin{definition}[{\cite[Sec. 4.2.3]{chazal}}]Consider a metric space $(M,d)$ with subsets $X,Y\subseteq M$, and let $\alpha>0$ be a positive radius. The \textbf{(generalized) \cech complex} $\mathscr{C}_{Y}(X,\alpha)$ is the simplicial complex
\begin{equation*}
    \mathscr{C}_{Y}(X,\alpha):=\{\sigma \subseteq X \,|\, \exists y \in Y  \text{ such that } \forall x\in\sigma \quad d(x,y)<\alpha\},
\end{equation*}
i.e. a finite subset $\sigma$ of $X$ is a simplex in $\mathscr{C}_{Y}(X,\alpha)$ whenever the intersection of all the open balls of radius $\alpha$ around points in $\sigma$ intersects $Y$.
\end{definition}

Note that $\mathscr{C}_{Y}(X,\alpha)$ is called the \textbf{ambient \cech complex} in \cite{chazal}, but we use that term for the special case where $Y$ equals the ambient space $M$. If $Y=X$ we have what we call the \textbf{intrinsic \cech complex}.

An important idea in topological data analysis is that instead of looking at only one radius to form a \cech complex, we look at all possible radii, giving a filtration. In general a \textbf{filtered simplicial complex} $\mathcal{K}$ is a collection of simplicial complexes $\mathcal{K}=\{K_\alpha\}_{\alpha>0}$ such that we have inclusions $K_\beta \subseteq K_{\alpha}$ whenever $\beta\leq \alpha$. These inclusions induce homomorphisms on $k$-th homology groups $\phi^{\beta,\alpha}_k:H_k(K_\beta)\to H_k(K_{\alpha})$ for any integer $k\geq 0$, and the images $\text{Im}\,\phi^{\beta,\alpha}_k$ are called the \textbf{$k$-th persistent homology groups} for the filtered simplicial complex $\mathcal{K}$ \cite[VII.1]{edelsbrunner}.  We get the \textbf{filtered \cech complex} by varying $\alpha$, denoted by $\mathcal{C}_Y(X):=\{\mathscr{C}_{Y}(X,\alpha)\}_{\alpha>0}$.\\

The main problem we are examining in this paper is to determine the homology of an unknown metric subspace $X\subseteq M$ by only using a known sample set $A\subseteq X$. The setup presented so far gives us several different \cech complexes by substituting the $X$ and $Y$ in the definition with different combinations of $M$, $X$ and $A$. We want to compare these \cech complexes for the purpose of determining the homology of $X$ through them, first noting from the following lemma that they all fit into a commuting diagram of inclusions.
	
\begin{lemma}\label{lemma inclusions}
	Let $(M,d)$ be a metric space, let $A \subseteq X \subseteq M$ and $Y\subseteq M$. For every $\alpha>0$ we have the following two inclusions:
	\begin{enumerate}[label={\roman*)}]
		\item $\mathscr{C}_{Y}(A, \alpha)\hookrightarrow \mathscr{C}_{Y}(X, \alpha)$ where the vertex map is the inclusion $A\hookrightarrow X$, and
		\item $\mathscr{C}_{A}(Y, \alpha)\hookrightarrow \mathscr{C}_{X}(Y, \alpha)$ where the vertex map is the identity $\id_Y$.
	\end{enumerate}
\end{lemma}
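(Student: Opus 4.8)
The plan is to read both statements straight off the definition of the generalized \cech complex, exploiting the fact that the only certificate needed to declare a finite set $\sigma$ a simplex is a single ``witness'' point in the second argument whose open $\alpha$-ball contains $\sigma$. For each part I would (a) take an arbitrary simplex of the smaller complex, extract its witness point, and check that the same witness still certifies it in the larger complex; and (b) confirm that the stated vertex map is injective and carries every simplex onto itself as a set, so that it is literally an inclusion of simplicial complexes.

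For i), given $\sigma\in\mathscr{C}_{Y}(A,\alpha)$ there is $y\in Y$ with $d(x,y)<\alpha$ for all $x\in\sigma$. Since $A\subseteq X$, the set $\sigma$ is a finite subset of $X$ as well, and the same $y$ shows $\sigma\in\mathscr{C}_{Y}(X,\alpha)$; the vertex map $A\hookrightarrow X$ is injective and fixes $\sigma$, so it is an inclusion of simplicial complexes. For ii), given $\sigma\in\mathscr{C}_{A}(Y,\alpha)$ there is $a\in A$ with $d(x,a)<\alpha$ for all $x\in\sigma$; as $A\subseteq X$ we have $a\in X$, so $a$ itself witnesses $\sigma\in\mathscr{C}_{X}(Y,\alpha)$, and here the vertex map $\id_Y$ is injective and fixes $\sigma$.

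I do not expect a real obstacle here: the argument is a direct unwinding of the definition, and the only genuine content is the monotonicity of $\mathscr{C}_{Y}(X,\alpha)$ under enlarging either the vertex argument $X$ (part i) or the witness argument $Y$ (part ii), with the same center reused in each case. The one point worth stating carefully is what ``inclusion'' means, namely an injective simplicial map whose image is a subcomplex; both vertex maps are visibly injective, so there is nothing subtle to check.
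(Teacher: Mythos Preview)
Your proposal is correct and matches the paper's own proof essentially line for line: in each part you take a simplex, extract its witness point, and use $A\subseteq X$ to certify membership in the larger complex with the same witness. The only addition is your explicit remark that the vertex maps are injective and fix each simplex setwise, which the paper leaves implicit.
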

\begin{proof}
	\begin{enumerate}[label={\roman*)}]
		\item Let $\sigma\in\mathscr{C}_{Y}(A, \alpha)$, then $\sigma\subseteq A$ with $y\in Y$ such that $d(a,y)<\alpha$ for all $a\in\sigma$. Since $A\subseteq X$, we have $\sigma\subseteq X$ and we can choose the same $y\in Y$ making $\sigma\in\mathscr{C}_{Y}(X, \alpha)$ . 
		\item Let $\sigma\in\mathscr{C}_{A}(Y, \alpha)$, and let $a\in A$ such that $d(y,a)<\alpha$ for all $y\in\sigma\subseteq Y$. Since $A\subseteq X$ we have $a\in X$, and so $\sigma\in\mathscr{C}_{X}(Y, \alpha)$.
	\end{enumerate}
\end{proof}

In the case of an unknown subspace $X\subseteq M$ together with a known sample $A\subseteq X$, we get the following result directly by changing $Y$, $X$, and $A$ in Lemma \ref{lemma inclusions} with suitable combinations of $A$, $X$ and $M$.

\begin{proposition}\label{diagram 1}
	Let $(M,d)$ be a metric space. If $A \subseteq X \subseteq M$, then for any $\alpha>0$ we obtain the following inclusions of \cech complexes and commutative diagram
		
	\begin{center}	
		$\xymatrix{
			& \mathscr{C}_{X}(A, \alpha) \ar@{^{(}->}[dr]\ar@{^{(}->}[r] & \mathscr{C}_{M}(A, \alpha)\ar@{^{(}->}[r] & \mathscr{C}_{M}(X, \alpha)\ar@{^{(}->}[dr] \\
			\mathscr{C}_{A}(A, \alpha) \ar@{^{(}->}[ur]\ar@{^{(}->}[dr] & 
			& \mathscr{C}_{X}(X, \alpha)\ar@{^{(}->}[ur]\ar@{^{(}->}[dr] & & \mathscr{C}_{M}(M, \alpha)  \\
			& \mathscr{C}_{A}(X, \alpha)\ar@{^{(}->}[ur]\ar@{^{(}->}[r] & \mathscr{C}_{A}(M, \alpha)\ar@{^{(}->}[r] & \mathscr{C}_{X}(M, \alpha)\ar@{^{(}->}[ur]&
		}$\qed
	\end{center}
	
\end{proposition}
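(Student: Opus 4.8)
The plan is to read the whole diagram off Lemma \ref{lemma inclusions}: each of the twelve arrows is a single instance of part (i) or part (ii) of that lemma, obtained by substituting the three sets $A$, $X$, $M$ of the chain $A\subseteq X\subseteq M$ into the roles of the lemma's ``$A$'', ``$X$'' and ``$Y$''; commutativity is then a one-line observation about vertex maps.

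First I would split the arrows into two families. Each arrow that keeps the subscript fixed and enlarges the argument to the next set of the chain — namely $\mathscr{C}_{Y}(A,\alpha)\hookrightarrow\mathscr{C}_{Y}(X,\alpha)$ and $\mathscr{C}_{Y}(X,\alpha)\hookrightarrow\mathscr{C}_{Y}(M,\alpha)$ for $Y\in\{A,X,M\}$ — is exactly Lemma \ref{lemma inclusions}(i) applied with its ``$Y$'' set to the common subscript and its ``$A\subseteq X$'' set to the relevant sub-chain; the vertex map is the corresponding set inclusion. This produces six arrows. Each arrow that keeps the argument fixed and enlarges the subscript — namely $\mathscr{C}_{A}(Z,\alpha)\hookrightarrow\mathscr{C}_{X}(Z,\alpha)$ and $\mathscr{C}_{X}(Z,\alpha)\hookrightarrow\mathscr{C}_{M}(Z,\alpha)$ for $Z\in\{A,X,M\}$ — is Lemma \ref{lemma inclusions}(ii) with its ``$Y$'' set to the common argument and its ``$A\subseteq X$'' set to the relevant sub-chain of subscripts; the vertex map is an identity. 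This produces the remaining six arrows. In every case one checks the substitution is admissible, i.e. that the chosen triple sits inside $M$ as a chain and the chosen ``$Y$'' lies in $M$; this is immediate since $A\subseteq X\subseteq M$ and any of $A,X,M$ lies in $M$.

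Next I would verify commutativity. Observe that moving along any directed edge of the diagram never shrinks the argument set and only replaces it, if at all, by the next set up the chain $A\subseteq X\subseteq M$; so if a directed path runs from a complex with argument $Z$ to one with argument $Z'$, then $Z\subseteq Z'$ in $M$. Each edge is a simplicial map whose vertex map is a set inclusion between subsets of $M$ — the identities appearing in part (ii) being the trivial such inclusions — so the vertex map of a composite along such a path is the composite of these inclusions, which is simply the inclusion $Z\hookrightarrow Z'$. Since a simplicial map is determined by its action on vertices and a composite of subset inclusions in $M$ depends only on its source and target, any two parallel directed paths induce the same simplicial map; hence the diagram commutes.

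I do not expect a real obstacle: all the mathematical content is already in Lemma \ref{lemma inclusions}, and what remains is the bookkeeping of checking that every arrow of the diagram is covered and that each substitution respects the chain hypothesis. The only point worth stating carefully is that moving through the diagram never shrinks the argument set, which is precisely what makes the composite vertex maps well defined and the commutativity assertion meaningful; with that noted, no further argument is needed.
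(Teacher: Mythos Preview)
Your proposal is correct and follows exactly the route the paper takes: the paper simply states that the result is obtained ``directly by changing $Y$, $X$, and $A$ in Lemma \ref{lemma inclusions} with suitable combinations of $A$, $X$ and $M$'' and ends with a \qed, so your careful splitting of the twelve arrows into six instances of part (i) and six of part (ii), together with the vertex-map argument for commutativity, is a fully spelled-out version of the same argument.
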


This diagram gives a framework for the rest of this paper. In the next section we see how each of the three complexes on the top of the diagram is homotopy equivalent to their respective complex at the bottom, and later we see that we construct simplicial maps in the opposite direction of the ones in the diagram by increasing the radius, leading to interleavings of homology groups.

%the three complexes on the top of the diagram are homotopy equivalent to the three complexes at the bottom, and later we see that we can construct simplicial maps in the opposite direction of the ones in the diagram by increasing the radius, leading to interleavings of homology groups.

\section{Dowker's Theorem for \cech complexes}
In this section we give a short demonstration on how Dowker's Theorem \cite[Thm. 1]{Dowker} can be applied to \cech complexes. We reason by the functorial Dowker theorem of Chowdhury and Mémoli \cite[Thm. 3]{chowdhury}.

For two sets $X$ and $Y$, a subset of their product $R\subseteq X\times Y$ is called a \textbf{relation}. The \textbf{Dowker complex} of this relation $R$ is the simplicial complex 
\begin{equation*}
    D(R)=\{\sigma\subseteq X\,|\,\exists y\in Y \text{ such that }(x,y)\in R \,\,\,\forall x\in\sigma\},
\end{equation*}
where simplices are all finite subsets of $X$ whose elements are related to a common element in $Y$.
Dowker's Theorem gives a homotopy equivalence between the Dowker complex of a relation and that of its \textbf{transpose relation} $R^T=\{(y,x)\,|\,(x,y)\in R\}\subseteq Y\times X$ where the same elements are related, but they have changed position. The functorial Dowker theorem \cite[Thm. 3]{chowdhury} states that this homotopy equivalence gives diagrams that commute up to homotopy when looking at inclusions of relations $R'\subseteq R$. Fixing a positive number $\alpha>0$, we can consider any \cech complex of the form $\mathscr{C}_Y(X,\alpha)$ as the Dowker complex of the relation $R_\alpha = \{(x,y)\,|\, d(x,y)<\alpha\}\subseteq X\times Y$ where two elements are related if the distance between them is less than $\alpha$. Moreover, inclusions of spaces give inclusions of relations, leading to the following result.

\begin{theorem}[Dowker's Theorem for \cech complexes]\label{DowkerCech}
    Let $(M,d)$ be a metric space, $0<\beta\leq\alpha$ and $A\times B\subseteq X\times Y\subseteq M\times M$. Then there exist homotopy equivalences $|\Gamma_{A,B}^\beta|:| \mathscr{C}_B(A,\beta)|\rightarrow | \mathscr{C}_A(B,\beta)|$ and $|\Gamma_{X,Y}^{\alpha}|:|\mathscr{C}_Y(X,\alpha)|\rightarrow |\mathscr{C}_X(Y,\alpha)|$ such that the following diagram commutes up to homotopy
    \begin{center}	
		$\xymatrix{
			| \mathscr{C}_B(A,\beta)| \ar[d]^{|\Gamma_{A,B}^\beta|}\ar@{^{(}->}[r]^{}& |\mathscr{C}_Y(X,\alpha)| \ar[d]^{|\Gamma_{X,Y}^{\alpha}|}\\
			| \mathscr{C}_A(B,\beta)|\ar@{^{(}->}[r]^{} & |\mathscr{C}_X(Y,\alpha)|
		}$	
	\end{center}
	where the horizontal maps are inclusions (compositions of maps like in Lemma \ref{lemma inclusions}).\qed
\end{theorem}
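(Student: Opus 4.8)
The plan is to deduce this directly from the functorial Dowker theorem \cite[Thm. 3]{chowdhury} by realizing every \cech complex in the diagram as a Dowker complex. First I would set, for subsets $U,V\subseteq M$ and a radius $\gamma>0$, the relation $R_\gamma^{U,V}:=\{(u,v)\in U\times V\mid d(u,v)<\gamma\}$. Unwinding the two definitions shows at once that $D(R_\gamma^{U,V})=\mathscr{C}_V(U,\gamma)$, and since $d$ is symmetric the transpose relation is $(R_\gamma^{U,V})^T=R_\gamma^{V,U}$, so $D\bigl((R_\gamma^{U,V})^T\bigr)=\mathscr{C}_U(V,\gamma)$. In particular the two vertical maps asked for in the statement are exactly instances of the Dowker homotopy equivalence $|D(R)|\to|D(R^T)|$, for $R=R_\beta^{A,B}$ and for $R=R_\alpha^{X,Y}$.

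Next I would exhibit the inclusion of relations that feeds into the functorial statement. Because $\beta\leq\alpha$, $A\subseteq X$ and $B\subseteq Y$, any pair $(a,b)$ with $a\in A$, $b\in B$ and $d(a,b)<\beta$ also lies in $X\times Y$ and satisfies $d(a,b)<\alpha$; hence $R_\beta^{A,B}\subseteq R_\alpha^{X,Y}$, a sub-relation carried by the inclusion of vertex sets $A\subseteq X$ and $B\subseteq Y$. Applying \cite[Thm. 3]{chowdhury} to this inclusion produces homotopy equivalences, which we name $|\Gamma_{A,B}^\beta|\colon|\mathscr{C}_B(A,\beta)|\to|\mathscr{C}_A(B,\beta)|$ and $|\Gamma_{X,Y}^{\alpha}|\colon|\mathscr{C}_Y(X,\alpha)|\to|\mathscr{C}_X(Y,\alpha)|$, together with a square that commutes up to homotopy whose horizontal arrows are the maps of geometric realizations induced by the simplicial inclusions $D(R_\beta^{A,B})\hookrightarrow D(R_\alpha^{X,Y})$ and $D\bigl((R_\beta^{A,B})^T\bigr)\hookrightarrow D\bigl((R_\alpha^{X,Y})^T\bigr)$.

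It then remains to identify those two horizontal inclusions with the inclusions of \cech complexes named in the statement, i.e. with compositions of the basic inclusions of Lemma \ref{lemma inclusions} together with the evident ``enlarge the radius'' inclusions $\mathscr{C}_V(U,\beta)\hookrightarrow\mathscr{C}_V(U,\alpha)$. For the top row, for instance, one factors $\mathscr{C}_B(A,\beta)\hookrightarrow\mathscr{C}_B(A,\alpha)\hookrightarrow\mathscr{C}_Y(A,\alpha)\hookrightarrow\mathscr{C}_Y(X,\alpha)$, the middle map enlarging the witness set and the last enlarging the vertex set; each of these is the identity or an inclusion on the underlying vertices, so the composite agrees with $D(R_\beta^{A,B})\hookrightarrow D(R_\alpha^{X,Y})$, and the analogous factorization handles the bottom row. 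All of this is routine bookkeeping. The one point I would be most careful about — and the only genuinely non-formal step — is that $R_\beta^{A,B}$ is in general a \emph{proper} sub-relation of $R_\alpha^{X,Y}$, not the restriction $R_\alpha^{X,Y}\cap(A\times B)$, since the radius as well as both ground sets have changed; so the argument must invoke the functorial Dowker theorem in the generality of an arbitrary inclusion of relations along an inclusion of vertex sets, which is precisely the form in which \cite[Thm. 3]{chowdhury} is stated.
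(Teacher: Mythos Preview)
Your proposal is correct and is exactly the approach the paper takes: the paragraph preceding the theorem identifies $\mathscr{C}_Y(X,\alpha)$ with the Dowker complex of the relation $R_\alpha\subseteq X\times Y$, observes that inclusions of spaces (and radii) give inclusions of relations, and then invokes the functorial Dowker theorem \cite[Thm.~3]{chowdhury}, which is why the statement is marked with a \qed\ and no further proof. Your write-up simply spells out in detail what the paper leaves implicit, including the careful remark that one needs the functorial Dowker theorem for an arbitrary inclusion of relations rather than a restriction.
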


Going back to the problem of examining an unknown subspace $X\subseteq M$ from a known sample $A\subseteq X$, we have the following result.
\begin{corollary}\label{Equiv}
	If $A\subseteq X\subseteq M$, then for every $\alpha >0$ we have a diagram 
	\begin{center}	
		$\xymatrix{
			|\mathscr{C}_{X}(A, \alpha)|\ar@{^{(}->}[r] \ar[d]^{\simeq}& |\mathscr{C}_{M}(A, \alpha)|\ar@{^{(}->}[r] \ar[d]^{\simeq}& |\mathscr{C}_{M}(X, \alpha)|\ar[d]^{\simeq}\\
			|\mathscr{C}_{A}(X, \alpha)|\ar@{^{(}->}[r] & |\mathscr{C}_{A}(M, \alpha)|\ar@{^{(}->}[r]& |\mathscr{C}_{X}(M,\alpha)|,
		}$	
	\end{center}
	where each square commutes up to homotopy and the vertical arrows are homotopy equivalences. \qed
\end{corollary}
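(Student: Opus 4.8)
The plan is to obtain Corollary \ref{Equiv} as a direct application of Theorem \ref{DowkerCech}, instantiating the abstract four-space setup $A\times B\subseteq X\times Y\subseteq M\times M$ with the concrete choices coming from the sample $A$, the unknown space $X$, and the ambient space $M$. Concretely, I would run Theorem \ref{DowkerCech} three times, once for each of the three squares, in each case taking $\beta=\alpha$ so that the ``small radius'' and ``large radius'' coincide and the horizontal inclusions of the theorem become the inclusions appearing in Proposition \ref{diagram 1}.

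First I would produce the three vertical homotopy equivalences. For the left square, set $(A,B)=(A,X)$ and $(X,Y)=(A,M)$ in the notation of Theorem \ref{DowkerCech}; this is legitimate since $A\times X\subseteq A\times M\subseteq M\times M$, and it yields homotopy equivalences $|\mathscr{C}_X(A,\alpha)|\to|\mathscr{C}_A(X,\alpha)|$ and $|\mathscr{C}_M(A,\alpha)|\to|\mathscr{C}_A(M,\alpha)|$ together with the homotopy-commutativity of that square. For the middle square, take $(A,B)=(A,M)$ and $(X,Y)=(M,M)$, again valid because $A\times M\subseteq M\times M\subseteq M\times M$; this reuses the equivalence $|\mathscr{C}_M(A,\alpha)|\to|\mathscr{C}_A(M,\alpha)|$ on the left edge and gives the new equivalence $|\mathscr{C}_M(X,\alpha)|\to|\mathscr{C}_X(M,\alpha)|$ on the right edge, wait---that last one needs care, so for the rightmost square I would instead take $(A,B)=(A,M)$ and $(X,Y)=(X,M)$, valid because $A\times M\subseteq X\times M\subseteq M\times M$, which produces the equivalence $|\mathscr{C}_M(X,\alpha)|\to|\mathscr{C}_X(M,\alpha)|$ and the homotopy-commutativity of the right square, with the left edge again being $|\mathscr{C}_M(A,\alpha)|\to|\mathscr{C}_A(M,\alpha)|$. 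Since the equivalence attached to a given pair of spaces is the map $|\Gamma|$ furnished by Dowker's theorem and does not depend on which square we extracted it from, the three squares glue into the single diagram claimed in the corollary.

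The bulk of the work is thus purely bookkeeping: checking that the four-space inclusion hypothesis $A\times B\subseteq X\times Y\subseteq M\times M$ holds for each of the three chosen instantiations (all immediate from $A\subseteq X\subseteq M$), and checking that the horizontal maps produced by Theorem \ref{DowkerCech} in each instantiation are exactly the inclusions displayed in Proposition \ref{diagram 1} (they are, since in each case the vertex maps are either an inclusion of subsets or an identity, matching Lemma \ref{lemma inclusions}). I would also remark that taking $\beta=\alpha$ is the special case where the ``up to homotopy'' square of Theorem \ref{DowkerCech} degenerates appropriately, and that the vertical maps are homotopy equivalences because Dowker's theorem asserts $|\Gamma|$ is one for every single relation, not merely compatibly.

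I do not expect a genuine obstacle here; the one place to be careful is consistency of the shared edges, i.e. making sure that the map placed on, say, the $|\mathscr{C}_M(A,\alpha)|$-column is literally the same $|\Gamma_{A,M}^\alpha|$ in the left square and in the middle square, rather than two a priori different homotopy equivalences that merely happen to be homotopic. This is guaranteed because Theorem \ref{DowkerCech} (via the functorial Dowker theorem of Chowdhury--Mémoli) assigns to the relation $R_\alpha\subseteq A\times M$ a well-defined simplicial homotopy equivalence $\Gamma_{A,M}^\alpha$ depending only on that relation, so I would simply invoke that functoriality once at the start and then read off all three squares from it.
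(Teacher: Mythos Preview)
Your proposal is correct and matches the paper's approach: the corollary is stated with a \qed\ and is meant as an immediate application of Theorem~\ref{DowkerCech}, exactly via the instantiations you describe. The only slip is bookkeeping---the diagram has two squares, not three, so your ``left'' instantiation $(A,B)=(A,X)$, $(X,Y)=(A,M)$ and your ``rightmost'' instantiation $(A,B)=(A,M)$, $(X,Y)=(X,M)$ already cover everything, with the shared middle edge $|\Gamma_{A,M}^\alpha|$ consistent for the functoriality reason you give.
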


Homotopy equivalences between topological spaces induce group isomorphisms between their homology (and homotopy) groups (see Hatcher \cite[Cor. 2.11 (and p.342)]{Hatcher}). Moreover, since all maps in the diagram of Proposition \ref{diagram 1} are inclusions, we can apply the version for \cech complexes of Dowker's Theorem \ref{DowkerCech} and get vertical isomorphisms between the homology groups of the top three complexes and the bottom three complexes, as seen in the next result.

\begin{corollary}\label{diagram 1 hom}
	If $A \subseteq X \subseteq M$, then we have a commutative diagram 
	\begin{center}	
		$\xymatrix@C=0.8em{
			& H_k(\mathscr{C}_{X}(A, \alpha)) \ar[dd]^\cong\ar[dr]\ar[r] & H_k(\mathscr{C}_{M}(A, \alpha))\ar[r]\ar@{{-}}[d]^\cong & H_k(\mathscr{C}_{M}(X, \alpha))\ar[dr]\ar[dd]^\cong \\
			H_k(\mathscr{C}_{A}(A, \alpha)) \ar[ur]\ar[dr] & 
			& H_k(\mathscr{C}_{X}(X, \alpha))\ar[d]\ar[ur]\ar[dr] & & H_k(\mathscr{C}_{M}(M, \alpha))  \\
			& H_k(\mathscr{C}_{A}(X, \alpha))\ar[ur]\ar[r] & H_k(\mathscr{C}_{A}(M, \alpha))\ar[r] & H_k(\mathscr{C}_{X}(M, \alpha))\ar[ur]&
		}$
	\end{center}
	for all $k\geq 0$ and all $\alpha>0$, 	where all non-vertical maps are induced by inclusions and the vertical maps are group isomorphisms.\qed
\end{corollary}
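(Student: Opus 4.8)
The plan is to assemble the asserted diagram from two ingredients that are already in hand: Proposition~\ref{diagram 1}, which after applying homology accounts for every arrow induced by an inclusion, and Corollary~\ref{Equiv}, which supplies the vertical maps and the commutativity relating the top and bottom rows. Throughout, the only ``new'' input is the fact that $H_k(-)$ is a homotopy-invariant functor, so homotopic maps have equal effect on homology and homotopy equivalences induce isomorphisms.

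The first step is purely functorial. The geometric realization of the commutative diagram of inclusions in Proposition~\ref{diagram 1} is a commutative diagram of topological spaces and continuous maps (the realizations of simplicial inclusions), so applying $H_k(-)$ to it yields a commutative diagram of abelian groups in which every arrow is the homomorphism induced by an inclusion of \cech complexes. This produces all the non-vertical arrows of the target diagram, together with the commutativity of every subdiagram that does not involve a vertical arrow.

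The second step produces the vertical maps. Fix $\alpha>0$. Corollary~\ref{Equiv} provides homotopy equivalences $|\mathscr{C}_{X}(A,\alpha)|\simeq|\mathscr{C}_{A}(X,\alpha)|$, $|\mathscr{C}_{M}(A,\alpha)|\simeq|\mathscr{C}_{A}(M,\alpha)|$ and $|\mathscr{C}_{M}(X,\alpha)|\simeq|\mathscr{C}_{X}(M,\alpha)|$; since a homotopy equivalence induces an isomorphism on every homology group (Hatcher~\cite[Cor.~2.11]{Hatcher}), applying $H_k(-)$ to these three maps gives exactly the three vertical isomorphisms appearing in the statement. No vertical map is needed on the outer objects $\mathscr{C}_{A}(A,\alpha)$, $\mathscr{C}_{X}(X,\alpha)$, $\mathscr{C}_{M}(M,\alpha)$, since each of them appears only once.

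The third step is to check that inserting these isomorphisms preserves commutativity, which amounts to verifying each square or triangle that touches a vertical arrow. The two squares of Corollary~\ref{Equiv} commute up to homotopy, hence commute strictly once $H_k(-)$ is applied; together with the first step this settles every subdiagram except the triangles joining an outer object to the two rows, e.g.\ $H_k(\mathscr{C}_{A}(A,\alpha))\to H_k(\mathscr{C}_{X}(A,\alpha))\xrightarrow{\cong}H_k(\mathscr{C}_{A}(X,\alpha))$ versus $H_k(\mathscr{C}_{A}(A,\alpha))\to H_k(\mathscr{C}_{A}(X,\alpha))$. For such a triangle one invokes Theorem~\ref{DowkerCech} with $\beta=\alpha$ and the nested pair $A\times A\subseteq A\times X\subseteq M\times M$; the square it produces leaves, on the outer object, a Dowker self-equivalence of $\mathscr{C}_{A}(A,\alpha)$ attached to the \emph{symmetric} relation $R_\alpha\subseteq A\times A$, and this self-equivalence is contiguous to the identity (any Dowker vertex map for a symmetric relation is contiguous to the identity choice), hence induces the identity on homology. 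I do not expect a genuine obstacle here: the topological substance is already encapsulated in Corollary~\ref{Equiv} and the functorial Dowker theorem, so what remains is bookkeeping; the two places deserving a word of care are precisely the passage from ``commutes up to homotopy'' to ``commutes'' after applying $H_k(-)$, and — for the outer triangles — the observation that a Dowker equivalence of a symmetric relation acts as the identity on homology.
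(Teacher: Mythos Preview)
Your proposal is correct and follows the same route as the paper: apply $H_k$ to Proposition~\ref{diagram 1} for the inclusion-induced arrows, and use Corollary~\ref{Equiv} (via Theorem~\ref{DowkerCech}) together with homotopy invariance of homology for the vertical isomorphisms. The paper in fact gives no proof beyond the sentence preceding the statement and a \qed, so your third step---verifying the outer triangles by invoking Theorem~\ref{DowkerCech} on the symmetric pair $A\times A\subseteq A\times X$ and noting that the Dowker self-equivalence of a symmetric relation induces the identity on homology---is more care than the paper itself supplies; the point is valid and the argument goes through.
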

We get an analogous result to Corollary \ref{diagram 1 hom} by exchanging homology with homotopy. In the next section we give the basic framework of persistence groups and interleavings, and later we look at geometric conditions of our space $X$ and sampling $A$ that make the filtered versions of maps in the above diagram part of interleavings. These are ultimately used when calculating the homology of the unknown space $X$.

\section{Persistence groups and interleavings}
Having seen that the top and bottom \cech complexes in the diagram in Proposition \ref{diagram 1} are homotopy equivalent, we now want to compare how similar the rest of the \cech complexes in the diagram are to each other. We use a common tool of comparing filtered simplicial complexes, namely interleavings.

\begin{definition}
	A \textbf{persistence group} $G$ is a collection of groups $\{G_\alpha\}_{\alpha>0}$ together with group homomorphisms $\phi^G_{\beta,\alpha}: G_{\beta} \to G_\alpha$ when $0< \beta \leq \alpha $ such that $\phi^G_{\alpha, \alpha} = \text{Id}_{G_\alpha}$ and $ \phi^G_{\gamma, \alpha} = \phi^G_{\beta, \alpha} \circ \phi^G_{\gamma, \beta}$ whenever $0<\gamma \leq \beta \leq \alpha $.
\end{definition}
Categorically this is the same as a functor from the totally ordered set $(0,\infty)$ to the category of groups. We are mostly interested in persistence groups that are the $k$-th homology groups of filtered \cech complexes, namely $G_{\alpha}= H_{k}(\mathscr{C}_{X}(A, \alpha))$ for some dimension $k\geq0$. In this case, the group homomorphism $\phi^G_{\beta,\alpha}$ for $\beta\leq \alpha$ is the homomorphism on homology induced by the inclusion $\mathscr{C}_{X}(A, \beta)\hookrightarrow \mathscr{C}_{X}(A, \alpha)$. If we take homology with coefficients over a field, these persistence groups can be decomposed into interval modules \cite[Thm 1.1]{structurethm} and be visualized in barcodes. All our results hold for homology with any coefficients, and even for homotopy groups, but when talking about barcodes it is understood that we have homology with coefficients over a field. 

\begin{definition}
	For $\delta, \epsilon\geq0$, a \textbf{$(\delta, \epsilon)$-interleaving} $(f_\alpha,g_\alpha)$ between persistence groups $\{A_{\alpha}\}_{\alpha>0}$ and $\{B_{\alpha}\}_{\alpha>0}$ consists of homomorphisms $f_{\alpha}:A_{\alpha}\rightarrow B_{\alpha+\delta}$ and $g_{\alpha}:B_{\alpha}\rightarrow A_{\alpha+\epsilon}$ for every $\alpha>0$ such that the diagrams
	\begin{center}	
		$\xymatrix{
			B_{\alpha}\ar[r]^{g_{\alpha}}\ar[dr]_{\phi^B_{\alpha,\alpha+\epsilon+\delta}}& A_{\alpha+\epsilon}\ar[d]^{f_{\alpha+\epsilon}}& & A_{\alpha}\ar[r]^{f_{\alpha}}\ar[dr]_{\phi^A_{\alpha,\alpha+\delta+\epsilon}}& B_{\alpha+\delta}\ar[d]^{g_{\alpha+\delta}} \\
			& B_{\alpha+\epsilon +\delta } & & & A_{\alpha+\delta +\epsilon }
		}$		
	\end{center}
	commute, and if $\beta\leq\alpha$, then the following two diagrams also commute:
	\begin{center}	
		$\xymatrix{
			B_{\beta}\ar[r]^{g_{\beta}}\ar[d]^{\phi^B_{\beta,\alpha}}& A_{\beta+\epsilon}\ar[d]^{\phi^A_{\beta+\epsilon,\alpha+\epsilon}}& & A_{\beta}\ar[r]^{f_{\beta}}\ar[d]^{\phi^A_{\beta,\alpha}}& B_{\beta+\delta}\ar[d]^{\phi^B_{\beta+\delta,\alpha+\delta}}\\
			B_{\alpha}\ar[r]^{g_{\alpha}}& A_{\alpha+\epsilon} & & A_{\alpha}\ar[r]^{f_{\alpha}}& B_{\alpha+\delta}.
		}$		
	\end{center}
\end{definition}
Considering persistence groups as functors, the last two commuting diagrams correspond to $f$ and $g$ respectively being natural transformations.

When working with homology groups of simplicial complexes, it is useful to know about contiguous maps and the classical result that they are homotopic on the geometric realizations.

\begin{definition}
	Two simplicial maps $F,G:K\rightarrow K'$ are \textbf{contiguous} if for every simplex $\sigma\in K$, the union $F(\sigma)\cup G(\sigma)$ is a simplex in $K'$.
\end{definition}

\begin{lemma}[{\cite[Lemma 3.5.2]{Spanier}}] \label{contiguous lemma}
	If $F,G:K\rightarrow K'$ are contiguous simplicial maps, then $|F|, |G|:|K|\rightarrow|K'|$ are homotopic.
\end{lemma}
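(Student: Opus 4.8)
The plan is to exhibit an explicit straight-line homotopy inside the geometric realization $|K'|$. Recall that every point $x\in|K|$ has a unique \emph{carrier}: the smallest simplex $\sigma\in K$ whose closed geometric simplex $|\sigma|$ contains $x$, equivalently the simplex in whose interior $x$ lies. For such an $x$, the point $|F|(x)$ lies in $|F(\sigma)|$ and $|G|(x)$ lies in $|G(\sigma)|$, and both $F(\sigma)$ and $G(\sigma)$ are faces of $F(\sigma)\cup G(\sigma)$, which is a simplex of $K'$ \emph{precisely} because $F$ and $G$ are contiguous. Hence $|F|(x)$ and $|G|(x)$ both lie in the single closed geometric simplex $|F(\sigma)\cup G(\sigma)|\subseteq|K'|$, and since a geometric simplex is convex, the segment joining them stays inside $|K'|$.

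First I would define $H\colon|K|\times[0,1]\to|K'|$ by $H(x,t)=(1-t)\,|F|(x)+t\,|G|(x)$, where the convex combination is computed using the affine (barycentric-coordinate) structure of the simplex $|F(\sigma)\cup G(\sigma)|$ for $\sigma$ the carrier of $x$. By the previous paragraph this is well defined and does not depend on any choices. Clearly $H(x,0)=|F|(x)$ and $H(x,1)=|G|(x)$, so the whole argument reduces to checking that $H$ is continuous.

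For continuity I would argue simplex by simplex. Fixing $\sigma\in K$, the restriction of $H$ to $|\sigma|\times[0,1]$ takes values in the Euclidean simplex $|F(\sigma)\cup G(\sigma)|$ and is there given by an explicit formula: $|F|$ and $|G|$ are affine on $|\sigma|$, and forming the convex combination with the parameter $t$ is jointly continuous, so this restriction is continuous. These restrictions agree on overlaps $(|\sigma|\times[0,1])\cap(|\tau|\times[0,1])=|\sigma\cap\tau|\times[0,1]$, so they glue to a map on all of $|K|\times[0,1]$. Because $[0,1]$ is compact, hence locally compact, the product $|K|\times[0,1]$ carries the weak topology determined by the subspaces $|\sigma|\times[0,1]$, so a map out of $|K|\times[0,1]$ is continuous once its restriction to each $|\sigma|\times[0,1]$ is; this yields continuity of $H$ and establishes $|F|\simeq|G|$.

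The only non-formal point, and thus the main obstacle, is this last piece of point-set bookkeeping: one must use that $|K|\times[0,1]$ has the weak topology with respect to the cells $|\sigma|\times[0,1]$ — something that can fail for products of general CW-type spaces but holds here since the second factor is locally compact — together with the well-definedness of the carrier so that the formula for $H$ is unambiguous. Everything else (convexity of simplices, affineness of $|F|$ and $|G|$ on each $|\sigma|$, and the fact that contiguity is exactly what places $|F|(x)$ and $|G|(x)$ in a common simplex) is immediate.
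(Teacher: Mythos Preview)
Your proof is correct and is precisely the classical straight-line homotopy argument. The paper does not supply its own proof of this lemma; it simply cites \cite[Lemma~3.5.2]{Spanier}, so there is nothing to compare beyond noting that what you have written is essentially Spanier's argument: contiguity guarantees that $|F|(x)$ and $|G|(x)$ lie in a common closed simplex of $|K'|$, the linear homotopy therefore stays inside $|K'|$, and continuity follows because $|K|\times[0,1]$ carries the weak topology determined by the sets $|\sigma|\times[0,1]$ (using local compactness of $[0,1]$). Your treatment of the point-set subtlety at the end is appropriate and is exactly the care Spanier takes.
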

With this last result we can can work at the level of simplicial complexes to get interleavings of persistence groups that are homology groups of filtered simplicial complexes. More specifically, if we have two filtered simplicial complexes $\{K_\alpha\}_{\alpha>0}$ and $\{K'_\alpha\}_{\alpha>0}$, then to find a $(\delta,\epsilon)$-interleaving between their homology groups it is sufficient to find simplicial maps $F_\alpha:K_\alpha\to K'_{\alpha+\delta}$ and $G_\alpha:K'_\alpha\to K_{\alpha+\epsilon}$ and check that their composition is contiguous with the inclusion map. By Lemma \ref{contiguous lemma}, this corresponds to commutative diagrams of homology groups that are precisely the ones in the definition of interleaving. We take advantage of this methodology when we compare the different kinds of \cech complexes in the following section.

%%%%%%%%%%%%%%%%%%%%%%%%%%%%%%%%%%%%%%%%%%%%%%%%%%%%%

\section{Interleavings by the directed Hausdorff distance}

We return to our problem of an unknown subspace $X$ in $M$ and a known sample $A$ in $X$. We want to compare the different \cech complexes, in order to ultimately say something about $X$ itself. In this section we look at the directed Hausdorff distance between $X$ and $A$, which in a certain sense represents how dense our sampling is, and see how by restricting it we get interleavings between persistence groups constructed from the homology groups of different filtered \cech complexes. 
%These interleavings are in one direction homomorphism induced by inclusions from the diagram of Proposition \ref{diagram 1}. 

To construct these interleavings, we work at the level of \cech complexes by looking at the inclusions in Lemma \ref{lemma inclusions}, finding maps in the opposite direction by increasing the radius, and by applying Lemma \ref{contiguous lemma} for contiguous simplicial maps. All the results for homology in this section also hold for homotopy.

\begin{definition}[\cite{Harker}]
	Let $(X,d)$ be a metric space and $s>0$. A subset $A\subseteq X$ is an \textbf{s-approximation} of $X$ if for every $x\in X$ there exists an $a\in A$ such that $d(x,a)\leq s$, or equivalently if the balls of radius $s$ with centers in $A$ cover $X$.
\end{definition}

The directed Hausdorff distance is the smallest such $s$ making $A$ an $s$-approximation.
\begin{definition}
    The \textbf{directed Hausdorff distance} between two metric subspaces $X, A\subseteq M$ is defined by:
    \begin{equation*}
        \overrightarrow{d_{H}}(X,A):=\sup_{x \in X}\, \inf_{a \in A}\thinspace \thinspace d(x,a).
    \end{equation*}    
\end{definition}

Informally, this is the furthest away from $A$ you can get in $X$. Note in particular that if $A\subseteq X$ is an $s$-approximation, then $\overrightarrow{d_{H}}(X,A)\leq s$. If $A\subseteq X$ is an $s$-approximation of $X$, then there exists a (non-unique) projection map $\Pi:X\to A$ that sends points $x\in X$ to one of the points $\Pi(x)$ in $A$ so that $d(x,\Pi(x))\leq s$ and so that $\Pi|_A=\id_A$. We denote the inclusion map by $\iota:A\hookrightarrow X$.

\begin{proposition}\label{prop 1}
    Let $X,Y\subseteq M$ and let $A\subseteq X$ be an $s$-approximation. For all $k\geq0$ we have:
    \begin{enumerate}[label={\roman*)}]
        \item A $(0,s)$-interleaving $((\iota_\alpha)_*,(\Pi_\alpha)_*)$ between $\{H_{k}(\mathscr{C}_{Y}(A, \alpha))\}_{\alpha>0}$ and     
        
        $\{H_{k}(\mathscr{C}_{Y}(X, \alpha))\}_{\alpha>0}$, where $\iota_\alpha$ and $\Pi_\alpha$ are the maps
        \begin{displaymath}
        \iota_\alpha \colon
                \mathscr{C}_Y(A,\alpha) \hookrightarrow
                \mathscr{C}_Y(X,\alpha)
                \qquad
                \Pi_\alpha \colon \mathscr{C}_Y(X,\alpha) \to
                \mathscr{C}_Y(A,\alpha+s) 
        \end{displaymath}
        induced by the vertex maps \(\iota\) and \(\Pi\) described above. 
        \item A $(0, s)$-interleaving $((Id_{Y})_{\alpha*},(Id_{Y})_{\alpha*})$ between $\{H_{k}(\mathscr{C}_{A}(Y, \alpha))\}_{\alpha>0}$ and  $\{H_{k}(\mathscr{C}_{X}(Y, \alpha))\}_{\alpha>0}$, where the simplicial maps 
            \begin{displaymath}
             (Id_Y)_{\alpha} \colon
                \mathscr{C}_A(Y,\alpha) \hookrightarrow
                \mathscr{C}_X(Y,\alpha)
                \qquad
                (Id_Y)_{\alpha} \colon \mathscr{C}_X(Y,\alpha) \hookrightarrow
                \mathscr{C}_A(Y,\alpha+s).
        \end{displaymath}
        are induced by the identity vertex map on $Y$ denoted by $\id_Y$.
    \end{enumerate}
\end{proposition}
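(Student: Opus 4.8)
The plan is to verify Proposition~\ref{prop 1} directly from the machinery set up in Section~4, namely by exhibiting the relevant simplicial maps, checking that the required composites are contiguous with inclusions, and then invoking Lemma~\ref{contiguous lemma} to pass from contiguity to the commuting homology diagrams that constitute a $(0,s)$-interleaving. So the first step is to check that the four maps listed are well-defined simplicial maps. For $\iota_\alpha$ this is immediate from Lemma~\ref{lemma inclusions}(i). For $\Pi_\alpha$, I would take $\sigma = \{x_0,\dots,x_m\} \in \mathscr{C}_Y(X,\alpha)$ with witness $y\in Y$ satisfying $d(x_i,y)<\alpha$ for all $i$; then by the triangle inequality and the defining property $d(x_i,\Pi(x_i))\le s$ of the projection, $d(\Pi(x_i),y) \le d(\Pi(x_i),x_i) + d(x_i,y) < s+\alpha$, so $\Pi(\sigma)\in\mathscr{C}_Y(A,\alpha+s)$, and this $y$ works as a common witness. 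For part (ii), the first map is Lemma~\ref{lemma inclusions}(ii); the second uses that $A\subseteq X$ is an $s$-approximation, so given $\sigma\in\mathscr{C}_X(Y,\alpha)$ with witness $x\in X$ (i.e.\ $d(y_i,x)<\alpha$ for $y_i\in\sigma$), one picks $a = \Pi(x)\in A$ with $d(x,a)\le s$, yielding $d(y_i,a) < \alpha+s$, so $\sigma\in\mathscr{C}_A(Y,\alpha+s)$.

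The second step is to check the two triangle-type identities in the definition of interleaving, at the simplicial level. For (i) with $\delta=0,\epsilon=s$: the composite $\mathscr{C}_Y(A,\alpha)\xhookrightarrow{\iota_\alpha}\mathscr{C}_Y(X,\alpha)\xrightarrow{\Pi_\alpha}\mathscr{C}_Y(A,\alpha+s)$ is induced by the vertex map $\Pi\circ\iota = \Pi|_A = \id_A$, hence it literally equals the inclusion $\mathscr{C}_Y(A,\alpha)\hookrightarrow\mathscr{C}_Y(A,\alpha+s)$, so that triangle commutes on the nose. The other composite $\mathscr{C}_Y(X,\alpha)\xrightarrow{\Pi_\alpha}\mathscr{C}_Y(A,\alpha+s)\xhookrightarrow{\iota_{\alpha+s}}\mathscr{C}_Y(X,\alpha+s)$ is induced by the vertex map $\iota\circ\Pi\colon X\to X$, which sends $x$ to $\Pi(x)$. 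This need not be the identity, but I claim it is contiguous with the inclusion $\mathscr{C}_Y(X,\alpha)\hookrightarrow\mathscr{C}_Y(X,\alpha+s)$: given $\sigma=\{x_0,\dots,x_m\}\in\mathscr{C}_Y(X,\alpha)$ with common witness $y$, every point of $\sigma\cup(\iota\Pi)(\sigma) = \{x_0,\dots,x_m,\Pi(x_0),\dots,\Pi(x_m)\}$ lies within distance $\alpha+s$ of $y$ (the $x_i$ even within $\alpha$, and the $\Pi(x_i)$ within $\alpha+s$ by the triangle inequality as above), so the union is a simplex of $\mathscr{C}_Y(X,\alpha+s)$. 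By Lemma~\ref{contiguous lemma} the induced maps on realizations are homotopic, giving the required commuting triangle on homology. For (ii) the argument is the mirror image: one composite is induced by $\id_Y\circ\id_Y=\id_Y$ hence is literally the inclusion, and for the other one must check that $\mathscr{C}_A(Y,\alpha)\hookrightarrow\mathscr{C}_X(Y,\alpha)\hookrightarrow\mathscr{C}_A(Y,\alpha+s)$ — induced by $\id_Y$ — agrees with the inclusion $\mathscr{C}_A(Y,\alpha)\hookrightarrow\mathscr{C}_A(Y,\alpha+s)$, which it does on the nose since the vertex map is the identity. Actually here both composites are induced by the identity on $Y$, so both triangles commute strictly; the only subtlety is that the second map in (ii) depends on a choice of projection $\Pi$, but as a map of simplicial complexes it is the inclusion regardless, since $A\subseteq X$ and the vertex set is $Y$ in both.

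The third step is the naturality squares (the last two diagrams in the interleaving definition), i.e.\ commutativity with the filtration maps $\phi^{\beta,\alpha}$. At the simplicial level these are squares of simplicial maps all induced by fixed vertex maps ($\iota$, $\Pi$, or $\id_Y$, together with the identity on the relevant underlying vertex sets for the filtration inclusions), and one checks they commute by composing vertex maps: the only point requiring a word is that a single projection $\Pi\colon X\to A$ can be used simultaneously for all radii $\alpha$, so that $\Pi_\alpha$ and $\Pi_\beta$ are compatible with the inclusions $\mathscr{C}_Y(X,\beta)\hookrightarrow\mathscr{C}_Y(X,\alpha)$ and $\mathscr{C}_Y(A,\beta+s)\hookrightarrow\mathscr{C}_Y(A,\alpha+s)$. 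Fixing $\Pi$ once and for all at the outset makes this transparent. Passing to homology via the functor $H_k(|\cdot|)$ turns all of these into the commuting squares required.

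I expect the main obstacle to be purely bookkeeping: keeping straight which maps are induced by which vertex maps and confirming that the contiguity argument in part~(i) (the only place genuine contiguity, rather than strict equality, is needed) is applied to the correct pair. There is no deep content — every inequality is a one-line triangle-inequality estimate using $d(x,\Pi(x))\le s$ — but care is needed to state precisely that the $(0,s)$-interleaving structure requires commutativity only up to homotopy on realizations, which Lemma~\ref{contiguous lemma} supplies, and that a uniform choice of $\Pi$ across all $\alpha$ is what makes the naturality squares work.
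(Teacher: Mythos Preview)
Your proposal is correct and follows essentially the same approach as the paper: verify the maps are simplicial via the triangle inequality with $d(x,\Pi(x))\le s$, note that $\Pi|_A=\id_A$ makes one triangle commute on the nose while the other triangle for~(i) commutes up to contiguity using the common witness $y$, and that in~(ii) everything is induced by $\id_Y$ so the triangles commute strictly. If anything, you are slightly more explicit than the paper in spelling out the naturality squares and the need to fix a single $\Pi$ for all radii.
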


\begin{proof}
	$i)$ The inclusion $\iota_\alpha: \mathscr{C}_{Y}(A, \alpha)\hookrightarrow \mathscr{C}_{Y}(X, \alpha)$ is a simplicial map for all $\alpha>0$ by Lemma \ref{lemma inclusions} $i)$. To see that $\Pi_\alpha:\mathscr{C}_Y(X,\alpha)\to\mathscr{C}_Y(A,\alpha+s)$ is a simplicial map, let $\sigma \in\mathscr{C}_{Y}(X, \alpha)$ with $y\in Y$ so that $d(y,x)<\alpha$ for all $x\in\sigma$. We have that $d(y,\Pi(x))\leq d(y,x)+d(x,\Pi(x))<\alpha+s$ and so $\Pi_\alpha$ is a simplicial map for all $\alpha>0$.
	
	To see that the two following diagrams
    \begin{equation*}
        \begin{tikzcd}
            \mathscr{C}_Y(X,\alpha) \arrow[r,"\Pi_\alpha"]\arrow[dr,hook]&
            \mathscr{C}_Y(A,\alpha+s) \arrow[d,hook]\\
            & \mathscr{C}_Y(X,\alpha+s)
        \end{tikzcd}
        \quad\quad
        \begin{tikzcd}
            \mathscr{C}_Y(A,\alpha) \arrow[r,hook]\arrow[dr,hook] &
            \mathscr{C}_Y(X,\alpha)\arrow[d,"\Pi_\alpha"]\\ & \mathscr{C}_Y(A,\alpha+s)
        \end{tikzcd}
    \end{equation*}	
	commute up to contiguity, consider a simplex $\sigma \in \mathscr{C}_{Y}(X,\alpha)$ with $y\in Y$ so that $d(y,x)<\alpha$ for all $x\in \sigma$. Then $d(y,\Pi(x))\leq d(y, x)+ d(x,\Pi(x))<\alpha + s$ and $d(y,x)<\alpha<\alpha+s$ for any $x\in\sigma$, and in particular $\Pi_{\alpha}(\sigma)\cup \sigma \in\mathscr{C}_{Y}(X, \alpha+s)$. The second diagram commutes using the fact that $\Pi|_A=\id_A$ by construction.\newline
    
    $ii)$ The inclusion $(Id_Y)_\alpha:\mathscr{C}_{A}(Y, \alpha)\hookrightarrow \mathscr{C}_{X}(Y, \alpha)$ is a simplicial map for all $\alpha>0$ by Lemma \ref{lemma inclusions} $ii)$. To see that the inclusion $\mathscr{C}_{X}(Y,\alpha)\hookrightarrow \mathscr{C}_{A}(Y,\alpha+s)$ is well-defined, consider a simplex $\sigma\in \mathscr{C}_{X}(Y,\alpha)$ where $x\in X$ such that $d(x,y)<\alpha$ for all $y\in\sigma$. Then $\Pi(x)\in A$ is such that $d(\Pi(x),y)\leq d(\Pi(x),x)+d(x,y)<s+\alpha$, and so $\sigma\in \mathscr{C}_{A}(Y, \alpha+s)$. The contiguity for this case
    \begin{equation*}
          \begin{tikzcd}
                \mathscr{C}_X(Y,\alpha) \arrow[r,hook]\arrow[dr,hook]&
                \mathscr{C}_A(Y,\alpha+s) \arrow[d,hook]\\
                & \mathscr{C}_X(Y,\alpha+s)
           \end{tikzcd}
           \quad\quad
           \begin{tikzcd}
                \mathscr{C}_A(Y,\alpha) \arrow[r,hook]\arrow[dr,hook] &
                \mathscr{C}_X(Y,\alpha)\arrow[d,hook]\\
              & \mathscr{C}_A(Y,\alpha+s)
          \end{tikzcd}
       \end{equation*}
    
    follows from the fact that all the maps involved are the identity on the vertex set.
\end{proof}

If $\epsilon>\overrightarrow{d_{H}}(X,A)$, then $\epsilon> \inf_{a\in A}d(x,a)$ for all $x\in X$, so there exists an $a\in A$ such that $d(x,a)<\epsilon$ (if not $\epsilon$ would be a bigger lower bound) and thus $A$ is an $\epsilon$-approximation of $X$. This leads to the following corollary.

\begin{corollary}\label{cor directed hausdorff}
Let $X,Y\subseteq M$ and $A\subseteq X$. For any $\epsilon > \overrightarrow{d_{H}}(X,A)$, we have the following for all $k\geq0$:
    \begin{enumerate}[label={\roman*)}]
        \item A $(0,\epsilon)$-interleaving $((\iota_\alpha)_*,(\Pi_\alpha)_*)$ between $\{H_{k}(\mathscr{C}_{Y}(A, \alpha))\}_{\alpha>0}$ and     
        
        $\{H_{k}(\mathscr{C}_{Y}(X, \alpha))\}_{\alpha>0}$, where $\iota_\alpha$ and $\Pi_\alpha$ are the maps
        \begin{displaymath}
        \iota_\alpha \colon
                \mathscr{C}_Y(A,\alpha) \hookrightarrow
                \mathscr{C}_Y(X,\alpha)
                \qquad
                \Pi_\alpha \colon \mathscr{C}_Y(X,\alpha) \to
                \mathscr{C}_Y(A,\alpha+\epsilon) 
        \end{displaymath}
        induced by the vertex maps \(\iota\) and \(\Pi\) described above Proposition \ref{prop 1}.

         \item A $(0, \epsilon)$-interleaving $((Id_{Y})_{\alpha*},(Id_{Y})_{\alpha*})$ between $\{H_{k}(\mathscr{C}_{A}(Y, \alpha))\}_{\alpha>0}$ and  $\{H_{k}(\mathscr{C}_{X}(Y, \alpha))\}_{\alpha>0}$, where the simplicial maps 
            \begin{displaymath}
             (Id_Y)_{\alpha} \colon
                \mathscr{C}_A(Y,\alpha) \hookrightarrow
                \mathscr{C}_X(Y,\alpha)
                \qquad
                (Id_Y)_{\alpha} \colon \mathscr{C}_X(Y,\alpha) \hookrightarrow
                \mathscr{C}_A(Y,\alpha+\epsilon).
        \end{displaymath}
        are induced by the identity vertex map $\id_Y$.\qed
    \end{enumerate}
\end{corollary}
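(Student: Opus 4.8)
The plan is to deduce this statement directly from Proposition~\ref{prop 1} by observing that the hypothesis $\epsilon > \overrightarrow{d_{H}}(X,A)$ forces $A$ to be an $\epsilon$-approximation of $X$ in the sense of the definition preceding that proposition. Once that is established, both interleavings are obtained simply by instantiating Proposition~\ref{prop 1} with $s := \epsilon$, since the maps $\iota_\alpha$, $\Pi_\alpha$ and $(\id_Y)_\alpha$ appearing in the corollary are literally the ones from the proposition for this choice of $s$.

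First I would unwind the directed Hausdorff distance. By definition $\inf_{a \in A} d(x,a) \leq \overrightarrow{d_{H}}(X,A)$ for every $x \in X$, so the hypothesis gives $\inf_{a \in A} d(x,a) < \epsilon$ for all $x \in X$; by the defining property of an infimum there is then some $a \in A$ with $d(x,a) < \epsilon$, and in particular $d(x,a) \leq \epsilon$. Hence the balls of radius $\epsilon$ centred at the points of $A$ cover $X$, i.e.\ $A$ is an $\epsilon$-approximation of $X$, and there is an associated projection $\Pi \colon X \to A$ with $d(x,\Pi(x)) \leq \epsilon$ and $\Pi|_A = \id_A$, as described above Proposition~\ref{prop 1}. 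Applying parts~i) and~ii) of Proposition~\ref{prop 1} with $s = \epsilon$ then yields the two $(0,\epsilon)$-interleavings exactly as stated, with $((\iota_\alpha)_*,(\Pi_\alpha)_*)$ between $\{H_{k}(\mathscr{C}_{Y}(A, \alpha))\}$ and $\{H_{k}(\mathscr{C}_{Y}(X, \alpha))\}$, and $((\id_Y)_{\alpha*},(\id_Y)_{\alpha*})$ between $\{H_{k}(\mathscr{C}_{A}(Y, \alpha))\}$ and $\{H_{k}(\mathscr{C}_{X}(Y, \alpha))\}$.

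I do not expect any genuine obstacle here; the only point requiring care is that the inequality $\epsilon > \overrightarrow{d_{H}}(X,A)$ is strict, which is exactly what allows the passage from ``$\inf_{a} d(x,a) < \epsilon$'' to the existence of a witness $a \in A$: were we only to have $\epsilon = \overrightarrow{d_{H}}(X,A)$, the relevant infimum might fail to be attained and $A$ need not be an $\epsilon$-approximation. Everything else is a direct substitution into Proposition~\ref{prop 1}.
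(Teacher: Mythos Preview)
Your proposal is correct and matches the paper's approach exactly: the paper's proof is the sentence immediately preceding the corollary, which observes (just as you do) that $\epsilon > \overrightarrow{d_{H}}(X,A)$ implies $A$ is an $\epsilon$-approximation of $X$, so Proposition~\ref{prop 1} applies with $s=\epsilon$. Your remark about the necessity of the strict inequality is also in line with the paper's comment after the corollary about the compact case.
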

Note that if $A$ is compact, then for all $x\in X$ there exist an $a\in A$ where the infimum is achieved, i.e. where $d(x,a)=\inf_{a\in A} d(x,a)\leq\overrightarrow{d_{H}}(X,A)$. So in this case we also have $(0,\overrightarrow{d_{H}}(X,A))$-interleavings. 

If we look at the special case where $Y=A$, we get interleavings in homology (and homotopy) with the filtered intrinsic \cech complex $\{\mathscr{C}_{A}(A, \alpha)\}_{\alpha>0}$ that just involves points in the known subspace $A$. Looking additionally at the case $Y=X$, we see that the simplicial maps in Corollary \ref{cor directed hausdorff} revert the arrows from the left square of Proposition \ref{diagram 1} in the sense that the diagram
\begin{equation}\label{reversesquare1}
    \begin{tikzcd}
        & \mathscr{C}_{X}(A,\alpha+\epsilon) \arrow[dl,hook] & \\
        \mathscr{C}_{A}(A,\alpha+2\epsilon)\arrow[dd,hook] &
        & \mathscr{C}_{X}(X,\alpha)\arrow[ul,"\Pi_\alpha"]\arrow[dl,hook]\arrow[ddll,hook,bend left=20]\\
        &\mathscr{C}_{A}(X,\alpha+\epsilon)\arrow[ul,"\Pi_\alpha"] &\\
        \mathscr{C}_{X}(X,\alpha+2\epsilon)
    \end{tikzcd}
\end{equation}
commutes up to contiguity, for any $\alpha > 0$ and any $\epsilon>\overrightarrow{d_{H}}(X,A)$.

We end this subsection by noting that the result in Corollary \ref{cor directed hausdorff} is similar to a result from Chazal et al. \cite[Cor. 4.10]{chazal}, where they allow $A\not\subseteq X$ and get an $(\epsilon,\epsilon)$-interleaving if $\epsilon$ is bigger than the (undirected) Hausdorff distance $d_H(X,A)=\max\left\{\overrightarrow{d_{H}}(X,A),\overrightarrow{d_{H}}(A,X)\right\}$.

%%%%%%%%%%%%%%%%%%%%%%%%%%%%%%%%%%%%%%%%%%%%%

\section{Determining homology}
In this section we look at previous reconstruction results and see how the framework of interleavings leads to our main result of how to determine the homology groups of an unknown subspace of Euclidean space from the filtered intrinsic \cech complex of a sample. 

The results in this section use assumptions on the directed Hausdorff distance between the unknown space and the sample, and a property called the reach which we define next following the classical text by Federer {\cite[Sec. 4]{Federer}}.

For a metric space $(M,d)$ with a subspace $X\subseteq M$, we write the distance from a point $p\in M$ to such a subspace as $d(X,p)=d(p,X)=\inf_{x\in X} d(x,p)$. 

\begin{definition}[{\cite[Def. 4.1]{Federer}}]\label{unp}
For a subspace $X\subseteq M$, let $\unp(X)$ be the set of all points in $M$ that have a unique nearest point in $X$,
\begin{equation*}
    \unp(X) = \{ p\in M\,|\, \exists ! \, x\in X \text{ such that } d(x', p)\geq d(x,p) \text{ for all } x'\in X\}.
\end{equation*}
\end{definition}
Note that if $p\in \unp(X)$ with nearest point $x$, then $d(x,p)=d(X,p)$. We then have a well-defined projection map $\pi_X:\unp(X)\to X$, sending a point to its unique nearest point in $X$.
\begin{definition}[{\cite[Def. 4.1]{Federer}}]\label{localreach}
 The \textbf{local reach} $\tau(x)$ of a point $x\in X$ is the radius of the biggest ball centered at $x$ that is fully contained in $\unp(X)$, so
 \begin{equation*}
     \tau(x) := \sup \{ r \,|\, B(x,r)\subseteq \unp(X)\},
 \end{equation*}
 where $B(x,r)=\{p\in M\,|\,d(x,p) < r\}$ is the ball of radius $r$ around $x$.
\end{definition}

\begin{definition}[{\cite[Def. 4.1]{Federer}}]\label{reach}
The \textbf{reach} $\tau$ of a subspace $X\subseteq M$ is the smallest of all local reaches,
\begin{equation*}
    \tau:=\inf_{x\in X}\tau(x).
\end{equation*}
\end{definition}
%In particular, if the reach is positive, then the union of balls of radius $\tau$ whose centers lie in $X$ is included in $\unp(X)$. We call this union the $\tau$-offset $X^\tau$, and in general we have the following definition. 

As mentioned in the introduction, the following well-known result shows the reconstruction of a manifold in an Euclidean space ${\mathbb{R}^n}$ with positive reach, from a finite sample $A$ and the ambient \cech complex $\mathscr{C}_{\mathbb{R}^{n}}(A, \alpha)$ that is possible to determine. 

\begin{theorem}[{\cite[Prop. 3.1]{Smale}}]
	\label{well-known proposition}
	For $X$ a compact submanifold in $\mathbb{R}^{n}$ with positive reach $\tau$, and $A$ a finite subspace in $\mathbb{R}^{n}$ such that $d:=\overrightarrow{d_{H}}(X,A)<\sqrt{\frac{3}{20}}\thinspace \thinspace \tau$, then for all $\alpha \in (2d, \sqrt{\frac{3}{5}} \thinspace \thinspace \tau)$, the geometric realization of the \cech complex $\mathscr{C}_{\mathbb{R}^{n}}(A, \alpha)$ is homotopy equivalent to $X$.
\end{theorem}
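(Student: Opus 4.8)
The plan is to use the classical Nerve Lemma to trade the ambient \cech complex for a union of Euclidean balls, and then to deformation retract that union onto $X$ by the nearest-point projection; the hypotheses on the reach $\tau$ and the density $d$ serve precisely to make this retraction well defined. First I would observe that $\mathscr{C}_{\mathbb{R}^n}(A,\alpha)$ is exactly the nerve of the cover of $U:=\bigcup_{a\in A}B(a,\alpha)$ by the open balls $\{B(a,\alpha)\}_{a\in A}$, since a finite $\sigma\subseteq A$ is a simplex iff $\bigcap_{a\in\sigma}B(a,\alpha)\neq\emptyset$. Every intersection of open balls is convex, hence empty or contractible, so this is a good cover of the open set $U$ by finitely many sets, and the Nerve Lemma (see \cite{Hatcher}) yields $|\mathscr{C}_{\mathbb{R}^n}(A,\alpha)|\simeq U$. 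It then suffices to prove $U\simeq X$. Here I would take the sample points to lie on the manifold, $A\subseteq X$, as in the submanifold setting of \cite{Smale}: this is what keeps the balls inside a tubular neighbourhood of $X$ and is implicit in bounding only $\overrightarrow{d_H}(X,A)$.

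Next I would set up the retraction. Because $\alpha>2d>d=\overrightarrow{d_H}(X,A)$, every $x\in X$ lies within $d<\alpha$ of some $a\in A$, so $X\subseteq U$. Because each centre $a$ lies in $X$, every $p\in U$ satisfies $d(p,X)\le d(p,a)<\alpha<\sqrt{3/5}\,\tau<\tau$, so $U$ is contained in the tubular neighbourhood $\{p:d(p,X)<\tau\}\subseteq\unp(X)$ (Definitions \ref{unp} and \ref{reach}), and the nearest-point projection $\pi_X\colon U\to X$ is defined and continuous. I would then consider the straight-line homotopy $H(p,t)=(1-t)p+t\,\pi_X(p)$ and verify two points: that the segment from $p$ to $\pi_X(p)$ stays in $\unp(X)$ and projects entirely to $\pi_X(p)$, which holds because it lies in the affine normal space at $\pi_X(p)$ and has length $<\alpha<\tau$ so never meets the medial axis; and, crucially, that $H(p,t)\in U$ for all $t$.

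The hard part, and the only place the precise constants enter, is showing $H(p,t)\in U$, i.e. that each fibre $\pi_X^{-1}(q)\cap U$ is star-shaped about $q$. Given $p\in B(a,\alpha)$ with $\pi_X(p)=q$, I would cover the segment $[q,p]$ by two balls: near $p$ the original ball $B(a,\alpha)$, which by convexity contains an initial sub-segment descending from $p$; and near $q$ a ball $B(a',\alpha)$ where density supplies $a'\in A$ with $|a'-q|\le d$. Writing $p=q+v$ with $v$ normal to $X$ at $q$ and $|v|=r<\alpha$, the Federer reach estimate \cite{Federer} bounds the normal components of nearby manifold points, $|\langle a'-q,\,v\rangle|\le |v|\,|a'-q|^2/(2\tau)$ and similarly for $a$, and this is what controls $|q+sv-a'|$ and $|q+sv-a|$ along the segment. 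Requiring the two coverage ranges to overlap for every $r<\alpha$ is exactly what forces $\alpha<\sqrt{3/5}\,\tau$; together with the requirement $\alpha>2d$, this yields the density bound $d<\tfrac12\sqrt{3/5}\,\tau=\sqrt{3/20}\,\tau$, which is also precisely what makes the interval $(2d,\sqrt{3/5}\,\tau)$ nonempty.

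Once fibrewise star-shapedness is established, $H$ is a strong deformation retraction of $U$ onto $X$, so $U\simeq X$, and combining this with the Nerve Lemma equivalence of the first step gives $|\mathscr{C}_{\mathbb{R}^n}(A,\alpha)|\simeq X$. I expect the overlap estimate of the previous paragraph to be the main obstacle, since the lower bound $2d$ and the upper bound $\sqrt{3/5}\,\tau$ must be calibrated simultaneously against the reach to guarantee that the whole segment $[q,p]$ is covered for all normal displacements up to $\alpha$; everything else in the argument is formal once this geometric inequality is in hand.
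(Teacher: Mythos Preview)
The paper does not give a proof of this theorem at all: it is stated purely as a citation of \cite[Prop.~3.1]{Smale} and serves as background motivation for the paper's own results. There is therefore no ``paper's own proof'' to compare against.

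That said, your outline faithfully reproduces the strategy of the original Niyogi--Smale--Weinberger argument: identify $|\mathscr{C}_{\mathbb{R}^n}(A,\alpha)|$ with the union $U=\bigcup_{a\in A}B(a,\alpha)$ via the Nerve Lemma (good cover by convex balls), include $X\subseteq U$ by density, include $U$ in the tubular neighbourhood of $X$ by the reach bound so that $\pi_X$ is defined, and then show the straight-line homotopy to $\pi_X$ stays inside $U$. Your identification of the fibrewise star-shapedness of $\pi_X^{-1}(q)\cap U$ as the crux, and of the reach inequality $|\langle a-q,v\rangle|\le |v|\,|a-q|^2/(2\tau)$ from \cite{Federer} as the controlling estimate, is exactly where the constants $\sqrt{3/5}$ and $\sqrt{3/20}$ enter in \cite{Smale}. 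One minor point: you correctly flag that $A\subseteq X$ is needed to place $U$ inside the tubular neighbourhood, and indeed the original \cite[Prop.~3.1]{Smale} assumes the sample lies on the manifold; the statement as reproduced in the present paper is slightly loose on this point.
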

Note that with this result, we can determine the homology of the unknown space $X$ using the ambient \cech complex that only depends on the sample and the ambient space.

In the introduction, we also stated the following theorem from \cite{Belen} based on the work presented by Jisu Kim et al. in \cite{NThm}. It has also been published in their newer version \cite{Wasserman} with a larger bound. The proof uses the Nerve Theorem (\cite{Hatcher} Corollary 4G.3) together with the fact that the balls with radius in the given interval form a good cover of $X$.

\begin{theorem}[{\cite[Cor.10]{Wasserman}}, {\cite[Thm. 2.16]{Belen}}]
	\label{mythm}
	Let $X \subseteq \mathbb{R}^{n}$ have positive reach $\tau$, and let $A\subseteq \mathbb{R}^{n}$. If $\alpha \in \left(\overrightarrow{d_{H}}(X,A), \tau\right]$, then the geometric realization of $\mathscr{C}_{X}(A, \alpha)$ is homotopy equivalent to $X$.
\end{theorem}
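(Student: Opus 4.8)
The plan is to realise $\mathscr{C}_X(A,\alpha)$ as the nerve of an explicit cover of $X$ and then invoke the Nerve Theorem. First I would observe that, directly from the definition, a finite set $\sigma\subseteq A$ is a simplex of $\mathscr{C}_X(A,\alpha)$ exactly when there is a point of $X$ lying in every open ball $B(a,\alpha)$ with $a\in\sigma$, that is, when $X\cap\bigcap_{a\in\sigma}B(a,\alpha)\neq\emptyset$. Hence $\mathscr{C}_X(A,\alpha)$ is precisely the nerve of the family $\mathcal{U}=\{B(a,\alpha)\cap X\}_{a\in A}$ of open subsets of $X$. The hypothesis $\alpha>\overrightarrow{d_H}(X,A)$ guarantees that $\mathcal{U}$ covers $X$: for each $x\in X$ we have $\inf_{a\in A}d(x,a)\le\overrightarrow{d_H}(X,A)<\alpha$, so some ball $B(a,\alpha)$ contains $x$. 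Since $X\subseteq\mathbb{R}^n$ is metric it is paracompact, so the Nerve Theorem (\cite[Cor. 4G.3]{Hatcher}) applies as soon as we verify that $\mathcal{U}$ is a good cover.

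The core of the argument is therefore to show that every nonempty finite intersection $X\cap\bigcap_{a\in\sigma}B(a,\alpha)$ is contractible, and this is exactly where the bound $\alpha\le\tau$ enters. The main tool is the nearest-point projection $\pi_X$, which Federer's theory of reach makes well-defined and continuous on the open tubular neighbourhood $U_\tau=\{p:d(p,X)<\tau\}$. I would treat the single-ball case first, since it already displays the mechanism: if $B(a,\alpha)\cap X\neq\emptyset$ then $d(a,X)<\alpha\le\tau$, so $a\in U_\tau$, and I would contract $B(a,\alpha)\cap X$ to the fixed point $\pi_X(a)$ by $H(x,t)=\pi_X\big((1-t)x+ta\big)$. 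Along the segment $q_t=(1-t)x+ta$ one has $d(q_t,X)\le d(q_t,x)=t\,|x-a|<\tau$, so $H$ is defined, and the telescoping estimate $d(\pi_X(q_t),a)\le d(\pi_X(q_t),q_t)+d(q_t,a)\le t|x-a|+(1-t)|x-a|=|x-a|<\alpha$ shows that the homotopy never leaves $B(a,\alpha)\cap X$; continuity of $\pi_X$ then makes $H$ a genuine contraction.

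The hard part will be the general finite intersection, where the balls no longer share a common centre to contract toward. Here the convex set $V=\bigcap_{a\in\sigma}B(a,\alpha)$ meets $X$ in $V\cap X$, and the naive projected-segment homotopy need not stay inside $V$, because the triangle inequality only yields $d(\pi_X(q),a)<2\alpha$. I expect this to be the central obstacle: one must use the hypothesis $\alpha\le\tau$ to rule out the intersection wrapping around $X$ (for $\alpha>\tau$ the intersection can already split into several arcs, as a unit circle in the plane shows) and to produce a contraction of $V\cap X$ compatible with every defining ball. I would first attempt this again by the projection philosophy, contracting $V\cap X$ toward a point deep inside $V$ (whose circumradius is at most $\alpha\le\tau$) while using the reach to control how far $\pi_X$ displaces points; failing a clean self-contained argument, the safest route is to invoke the good-cover lemma for sets of positive reach underlying the reconstruction results \cite{Smale,Belen,Wasserman}, which guarantees exactly that intersections of $X$ with balls of radius at most $\tau$ are contractible. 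Once contractibility of all finite intersections is in hand, the Nerve Theorem yields $|\mathscr{C}_X(A,\alpha)|\simeq X$, completing the proof.
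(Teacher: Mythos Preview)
Your approach is exactly the one the paper indicates: the theorem is cited from \cite{Wasserman,Belen} and the paper only remarks that the proof uses the Nerve Theorem (\cite[Cor.~4G.3]{Hatcher}) together with the fact that balls of the given radius form a good cover of $X$. You have correctly identified $\mathscr{C}_X(A,\alpha)$ as the nerve of $\{B(a,\alpha)\cap X\}_{a\in A}$, verified the covering condition from $\alpha>\overrightarrow{d_H}(X,A)$, and isolated the nontrivial step---contractibility of finite intersections when $\alpha\le\tau$---which is precisely the good-cover lemma supplied by the cited references and not reproved in the paper either.
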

In {\cite[Thm. 2.16]{Belen}} there is an extra condition that $A$ has to be compact, but the proof holds without this as it actually shows that if $A$ is compact, then we can pick $\alpha$ in the closed interval $[\overrightarrow{d_{H}}(X,A), \tau]$.

By setting $A=X$ in the previous theorem and using the fact that $\overrightarrow{d_{H}}(X,X)=0$, we get the following corollary.

\begin{corollary} \label{C X X hom to X}
   Let $X \subseteq \mathbb{R}^{n}$ have positive reach $\tau$. Then for $0<\alpha\leq\tau$, the geometric realization of $\mathscr{C}_{X}(X, \alpha)$ is homotopy equivalent to $X$.\qed
\end{corollary}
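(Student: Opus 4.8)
The plan is to derive Corollary \ref{C X X hom to X} as an immediate special case of Theorem \ref{mythm}. The hypotheses of Theorem \ref{mythm} require a subspace of $\mathbb{R}^n$ with positive reach and an arbitrary subset $A\subseteq\mathbb{R}^n$; so I would simply take $A=X$. Since $X$ trivially has positive reach $\tau$ by assumption, and $X\subseteq\mathbb{R}^n$ is a legitimate choice for the sample, the only thing to check is that the admissible interval for the radius $\alpha$ becomes $(0,\tau]$ in this case.

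First I would compute $\overrightarrow{d_H}(X,X)$. By definition $\overrightarrow{d_H}(X,X)=\sup_{x\in X}\inf_{x'\in X}d(x,x')$, and for each $x\in X$ the inner infimum is $0$ (achieved at $x'=x$), so $\overrightarrow{d_H}(X,X)=0$. Next I would substitute this into the hypothesis of Theorem \ref{mythm}: the interval $\left(\overrightarrow{d_H}(X,A),\tau\right]$ becomes $(0,\tau]$. Hence for every $\alpha$ with $0<\alpha\leq\tau$, Theorem \ref{mythm} applies to the pair $(X,X)$ and yields that the geometric realization of $\mathscr{C}_X(X,\alpha)$ is homotopy equivalent to $X$. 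This is exactly the assertion of the corollary.

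There is essentially no obstacle here; the statement is a pure instantiation. The only mild subtlety is making sure the degenerate choice $A=X$ is permitted by Theorem \ref{mythm} — but the theorem places no nondegeneracy or finiteness requirement on $A$ (indeed the excerpt explicitly notes that the compactness condition in {\cite[Thm.~2.16]{Belen}} can be dropped), so the substitution is valid. Consequently the proof reduces to the one-line observation that $\overrightarrow{d_H}(X,X)=0$ together with a citation of Theorem \ref{mythm}, and no further argument is needed.
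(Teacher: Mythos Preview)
Your proposal is correct and matches the paper's own argument exactly: the paper derives the corollary in one line by setting $A=X$ in Theorem~\ref{mythm} and using $\overrightarrow{d_{H}}(X,X)=0$. Your additional remark that no compactness or finiteness assumption on $A$ obstructs this choice is well placed.
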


Compared to Theorem \ref{well-known proposition}, Theorem \ref{mythm} relaxes some conditions on $X$ and $A$, but in exchange we need to know a lot of information about the unknown space $X$ to determine the homology/homotopy of the \cech complex $\mathscr{C}_{X}(A, \alpha)$. This makes it difficult to use in practice. However, next we use Theorem \ref{mythm} together with the interleavings we found in Section 5 to show that the persistent homology of the filtered intrinsic \cech complex $\mathcal{C}_A(A)=\{\mathscr{C}_A(A,t)\}_{t>0}$ determines the homology of $X$. This \cech complex is constructed only from the known sample $A$, meaning that we have complete information about it.

If we let $\alpha,\epsilon,\epsilon'>\overrightarrow{d_{H}}(X,A)$ such that $\alpha+\epsilon+\epsilon'\leq\tau$, in particular $\overrightarrow{d_{H}}(X,A)<\tau/3$, then by Theorem \ref{mythm} we get a homotopy equivalence $|\mathscr{C}_X(A,r)|\simeq X$ for $r=\alpha$, $r=\alpha+\epsilon$ and $r=\alpha+\epsilon+\epsilon'$. The functorial Nerve theorem \cite[Thm 5,4]{Virk_2021} gives homotopy equivalences between the three \cech complexes for the given $r$'s. Going to the level of homology and using the second interleaving of Corollary \ref{cor directed hausdorff}, we get the following commutative diagram 
\begin{equation}
    \begin{tikzcd}
        H_k(X)\arrow[r,"\cong"]\arrow[d,"="]&
        H_k(\mathscr{C}_X(A,\alpha))\arrow[d,"\cong"]\arrow[r,hook] &
        H_k(\mathscr{C}_A(A,\alpha+\epsilon))\arrow[d,"\phi_k^{\alpha+\epsilon,\alpha+\epsilon+\epsilon'}"]\arrow[dl,swap,"m",twoheadrightarrow] &\\
        H_k(X)\arrow[r,"\cong"]\arrow[d,"="]&
        H_k(\mathscr{C}_X(A,\alpha+\epsilon))\arrow[d,"\cong"]\arrow[r,"l",hook] & H_k(\mathscr{C}_A(A,\alpha+\epsilon+\epsilon'))\arrow[dl,twoheadrightarrow] \\
        H_k(X)\arrow[r,"\cong"]&
        H_k(\mathscr{C}_X(A,\alpha+\epsilon+\epsilon')) &
    \end{tikzcd}
\end{equation}
for any $k\geq 0$. Here $l$ is injective as it is the first factor of a bijection, and similarly $m$ is the last factor and therefore surjective. In particular $l$ is an isomorphism onto the image Im$\,\phi_k^{\alpha+\epsilon,\alpha+\epsilon+\epsilon'}$, which is a $k$-th persistent homology group for the filtered intrinsic \cech complex $\mathcal{C}_A(A)=\{\mathscr{C}_A(A,t)\}_{t>0}$ (see \cite[VII.1]{edelsbrunner}) and can be read directly from the barcodes of $\mathcal{C}_A(A)$ as the lines born before $\alpha+\epsilon$ and dying after $\alpha+\epsilon+\epsilon'$. 

We summarize this in the following theorem considering now a new $\alpha$ as the above $\alpha + \epsilon$ and renaming $\epsilon'$ as simply $\epsilon$.

\begin{theorem}\label{mainthm}
    Let $X\subseteq\mathbb{R}^n$ with positive reach $\tau$, let $A\subseteq X$ such that $d:=\overrightarrow{d_{H}}(X,A)<\tau/3$. If $\alpha\in(2d,\tau-d)$, then for any $k\geq 0$ and any $\epsilon\in(d,\tau-\alpha]$ the $k$-th homology group $H_k(X)$ is isomorphic to the $k$-th persistent homology group Im$\,\phi_k^{\alpha,\alpha+\epsilon}$ where 
    \begin{equation*}
        \phi_k^{\alpha,\alpha+\epsilon}:H_k(\mathscr{C}_A(A,\alpha))\to H_k(\mathscr{C}_A(A,\alpha+\epsilon))
    \end{equation*}
    is the map induced by the inclusion $\mathscr{C}_A(A,\alpha)\hookrightarrow \mathscr{C}_A(A,\alpha+\epsilon)$.   \qed
\end{theorem}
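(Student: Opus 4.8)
The plan is to reduce Theorem \ref{mainthm} to the commutative diagram already assembled just above the statement, after a careful bookkeeping of the parameters. The diagram preceding the theorem is built from three ingredients: the reconstruction result Theorem \ref{mythm} (giving $|\mathscr{C}_X(A,r)|\simeq X$ for the three radii $r=\alpha_0$, $r=\alpha_0+\epsilon_0$, $r=\alpha_0+\epsilon_0+\epsilon_0'$, where I temporarily decorate the old parameters with a subscript $0$), the functorial Nerve theorem \cite[Thm 5.4]{Virk_2021} (making these homotopy equivalences compatible with the inclusions as the radius grows), and part $ii)$ of Corollary \ref{cor directed hausdorff} with $Y=A$ (providing the vertical isomorphisms $H_k(\mathscr{C}_X(A,r))\cong H_k(\mathscr{C}_A(A,r+\epsilon_0))$ coming from the $(0,\epsilon_0)$-interleaving between the filtered complexes $\mathscr{C}_A(A,-)$ and $\mathscr{C}_X(A,-)$). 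So the first step is simply to verify that under the hypotheses of Theorem \ref{mainthm} I can choose $\epsilon_0,\epsilon_0'$ with $\overrightarrow{d_H}(X,A)<\epsilon_0,\epsilon_0'$ and $\alpha_0+\epsilon_0+\epsilon_0'\le\tau$ so that, after renaming $\alpha:=\alpha_0+\epsilon_0$ and $\epsilon:=\epsilon_0'$, the new parameters range exactly over $\alpha\in(2d,\tau-d)$ and $\epsilon\in(d,\tau-\alpha]$. Concretely: given such $\alpha$ and $\epsilon$, set $\epsilon_0'=\epsilon$, pick any $\epsilon_0\in(d,\alpha-d)$ (possible since $\alpha>2d$), and set $\alpha_0=\alpha-\epsilon_0>d\ge 0$; then $\alpha_0+\epsilon_0+\epsilon_0'=\alpha+\epsilon\le\tau$, and all three radii $\alpha_0,\alpha_0+\epsilon_0=\alpha,\alpha_0+\epsilon_0+\epsilon_0'=\alpha+\epsilon$ lie in $(\,\overrightarrow{d_H}(X,A),\tau\,]$ as required by Theorem \ref{mythm}.

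The second step is to extract the conclusion from the diagram. In the diagram the composite of the top row is the map $H_k(\mathscr{C}_X(A,\alpha_0))\to H_k(\mathscr{C}_A(A,\alpha_0+\epsilon_0))\to H_k(\mathscr{C}_A(A,\alpha_0+\epsilon_0+\epsilon_0'))$, which by the interleaving commutativity (the triangle with the surjection $m$) factors the isomorphism $H_k(\mathscr{C}_X(A,\alpha_0))\xrightarrow{\cong} H_k(\mathscr{C}_X(A,\alpha_0+\epsilon_0+\epsilon_0'))$ composed with a vertical isomorphism; hence the inclusion-induced map $l:H_k(\mathscr{C}_A(A,\alpha))\hookrightarrow H_k(\mathscr{C}_A(A,\alpha+\epsilon))$ is injective, being the first factor of (a composite that is) a bijection, and the map $m$ out of $H_k(\mathscr{C}_A(A,\alpha))$ down to $H_k(\mathscr{C}_X(A,\alpha+\epsilon))$ is surjective, being the last factor. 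Therefore the image of $l$, which is precisely $\operatorname{Im}\phi_k^{\alpha,\alpha+\epsilon}$, is isomorphic (via $l$ onto its image, then chasing the vertical isomorphisms $H_k(\mathscr{C}_A(A,\alpha))\cong H_k(\mathscr{C}_X(A,\alpha_0))\cong H_k(X)$) to $H_k(X)$. Finally I unwind the renaming: the $\alpha$ and $\epsilon$ of the theorem statement are $\alpha_0+\epsilon_0$ and $\epsilon_0'$, and the map $\phi_k^{\alpha,\alpha+\epsilon}$ is exactly the map called $\phi_k^{\alpha+\epsilon_0,\alpha+\epsilon_0+\epsilon_0'}$ in the pre-theorem diagram, namely the one induced by the inclusion $\mathscr{C}_A(A,\alpha)\hookrightarrow\mathscr{C}_A(A,\alpha+\epsilon)$.

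I expect the main obstacle to be purely organisational rather than conceptual: making sure the three named results line up on the same three radii with the correct shifts, and in particular checking that the interleaving of Corollary \ref{cor directed hausdorff}$ii)$ really does supply vertical \emph{isomorphisms} here. The point is that a $(0,\epsilon_0)$-interleaving between two persistence groups only gives isomorphisms at those radii $r$ where the intermediate structure maps $H_k(\mathscr{C}_A(A,r))\to H_k(\mathscr{C}_A(A,r+\epsilon_0))$ are themselves isomorphisms — which is exactly what Theorem \ref{mythm} plus the functorial Nerve theorem guarantee on the range $(\,\overrightarrow{d_H}(X,A),\tau\,]$, since each such complex is homotopy equivalent to $X$ compatibly with the inclusions. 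So the real content is that on this parameter window the filtered intrinsic \cech complex $\mathcal{C}_A(A)$ and the filtered complex $\mathcal{C}_X(A)$ become intertwined tightly enough that their persistent homology groups agree with $H_k(X)$; once the window $\alpha\in(2d,\tau-d)$, $\epsilon\in(d,\tau-\alpha]$ is seen to sit inside that window with room for the auxiliary shift $\epsilon_0$, the theorem follows by the diagram chase above. A secondary, minor point to keep honest is the half-open versus open intervals: $\epsilon=\tau-\alpha$ is allowed because Theorem \ref{mythm} permits the radius $\tau$ itself, while $\alpha<\tau-d$ is needed to leave strictly positive room for $\epsilon_0>d$; I will state these inequalities explicitly when choosing $\epsilon_0$.
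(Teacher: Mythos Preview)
Your overall plan --- three radii in $(\overrightarrow{d_H}(X,A),\tau]$, Theorem \ref{mythm} plus the functorial Nerve theorem for the isomorphisms on $H_k(\mathscr{C}_X(A,-))$, Corollary \ref{cor directed hausdorff}$ii)$ with $Y=A$ for the interleaving, then the renaming --- is exactly the paper's approach, and your parameter check in the first paragraph is correct.

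The diagram chase in your second paragraph, however, has a genuine error. You identify $l$ with the map $\phi_k^{\alpha,\alpha+\epsilon}\colon H_k(\mathscr{C}_A(A,\alpha))\to H_k(\mathscr{C}_A(A,\alpha+\epsilon))$ and claim it is injective as the ``first factor of a bijection''. It is not: $\phi$ is in general neither injective nor surjective (noise classes in $\mathscr{C}_A(A,\alpha)$ may die by $\alpha+\epsilon$). In the paper's diagram, $l$ is the interleaving map $H_k(\mathscr{C}_X(A,\alpha))\to H_k(\mathscr{C}_A(A,\alpha+\epsilon))$ and $m$ is the inclusion-induced map $H_k(\mathscr{C}_A(A,\alpha))\to H_k(\mathscr{C}_X(A,\alpha))$. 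The interleaving triangles give $l$ injective (first factor of the isomorphism $H_k(\mathscr{C}_X(A,\alpha))\cong H_k(\mathscr{C}_X(A,\alpha+\epsilon))$) and $m$ surjective (last factor of $H_k(\mathscr{C}_X(A,\alpha_0))\cong H_k(\mathscr{C}_X(A,\alpha))$). Since $\phi=l\circ m$ and $m$ is onto, one has $\operatorname{Im}\phi=\operatorname{Im}l$, and $l$ is an isomorphism from $H_k(\mathscr{C}_X(A,\alpha))\cong H_k(X)$ onto that image. Relatedly, your third paragraph's claim that the $(0,\epsilon_0)$-interleaving supplies \emph{isomorphisms} $H_k(\mathscr{C}_X(A,r))\cong H_k(\mathscr{C}_A(A,r+\epsilon_0))$ is unjustified and unnecessary: Theorem \ref{mythm} controls only the $\mathscr{C}_X(A,-)$ side, not the structure maps of $\mathscr{C}_A(A,-)$, and the argument goes through without ever asserting those interleaving maps are bijective.
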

If we pick $\epsilon$ as big as possible we get in particular that $H_k(X)$ is isomorphic to Im$\,\phi_k^{\alpha,\tau}$. Furthermore, if $A$ is finite (and hence compact) we can pick $\alpha = 2d$ by the discussion after Theorem \ref{mythm}, and we have $H_k(X)\cong \text{Im}\,\phi_k^{2d,\tau}$. As all our results before, this is still true when changing homology to homotopy in the statement. 

%Following similar arguments we also determine the homology of our space $X$ using $\mathscr{C}_{\mathbb{R}^n}(A)$.
%\begin{theorem}\label{mainthm2}
%    Let $X\subseteq\mathbb{R}^n$ with positive reach $\tau$, let $A\subseteq X$ such that $d:=\overrightarrow{d_{H}}(X,A)<\tau/3$. If $\alpha\in(d,\tau-2d)$, then for any $k\geq 0$ and any $\epsilon\in(d,\frac{\tau-\alpha}{2}]$ the $k$-th homology group $H_k(X)$ is isomorphic to the $k$-th persistent homology group Im$\,\psi_k^{\alpha,\alpha+\epsilon}$ where 
%    \begin{equation*}
%        \psi_k^{\alpha,\alpha+\epsilon}:H_k(\mathscr{C}_{\mathbb{R}^n}(A,\alpha))\to H_k(\mathscr{C}_{\mathbb{R}^n}(A,\alpha+\epsilon))
%    \end{equation*}
%    is the map induced by inclusion.   \qed
%\end{theorem}

The arguments in the proof of Theorem \ref{mainthm} are similar to an argument by Chazal and Oudot \cite[Thm. 3.5]{persistreconstruct}, where they look at the weak feature size and identify the homology of $X$ with a persistent homology group for the filtered ambient \cech complex $\mathcal{C}_{\mathbb{R}^n}(A)$. 

We conclude this section stressing that, as stated in Theorem \ref{mainthm}, we can recover the homology of an unknown subspace $X$ of Euclidean space from the filtered intrinsic \cech complex $\mathcal{C}_{A}(A)$ of a sufficiently dense sample $A$ in $X$. 
For large ambient dimension $n$, where the alpha complex is hard to determine, the ambient \cech complex used in Theorem \ref{well-known proposition} is also hard to approach. In this case the intrinsic \cech complex might be easiest to determine. If we take homology with coefficients in a field, we can read off the homology of $X$ from the barcodes of $\mathcal{C}_A(A)$ as the lines born before $\alpha$ and dying after $\alpha+\epsilon$.

\section{Explicit examples}
We now look at two concrete examples of how the interleavings from Corollary \ref{cor directed hausdorff} behave together with the reconstruction results in Section 6. 

\begin{example}[A subspace consisting of two points] \label{example 1} In this first example we work with a finite space, so it is not possible to apply the reconstruction results Theorem \ref{mythm} or Theorem \ref{well-known proposition}, as the directed Hausdorff distance is greater than the reach. We merely get the interleavings from Corollary \ref{cor directed hausdorff} and we check that the obvious homotopy equivalence from Corollary \ref{C X X hom to X} is satisfied.

Consider two distinct points on the real line $X:=\{x_1,x_2\}\subseteq \mathbb{R}$ and let our sample consist of one of the points $A:=\{x_1\}\subseteq X$. Let $\epsilon$ be the directed Hausdorff distance between $X$ and $A$ given by 
$\epsilon:=\overrightarrow{d_{H}}(X,A)=d(x_1,x_2)$. Then the reach $\tau$ is half this distance, that is, $\tau =\epsilon/2$. Calculating the different \cech complexes, we obtain:
\begin{align*}
    \mathscr{C}_{A}(A,\alpha)&=\{\{x_1\}\} \text{ for all } \alpha >0 \\
    \mathscr{C}_{X}(A,\alpha)&=\{\{x_1\}\} \text{ for all } \alpha >0\\
    \mathscr{C}_{A}(X,\alpha) &= 
    \begin{cases}
        \{\{x_1\}\} & 0<\alpha \leq \epsilon \\
        \{\{x_1\}, \{x_2\}, \{x_1,x_2\}\} & \alpha > \epsilon 
    \end{cases}\\
    \mathscr{C}_{X}(X,\alpha) &= 
    \begin{cases}
        \{\{x_1\}, \{x_2\}\} & 0<\alpha \leq \epsilon \\
        \{\{x_1\}\}, \{x_2\}, \{x_1,x_2\}\} & \alpha > \epsilon .
    \end{cases}
\end{align*}
We first note that we have the inclusions from the left square of the diagram in Proposition \ref{diagram 1}, and the homotopy equivalence $|\mathscr{C}_{X}(A,\alpha)|\simeq|\mathscr{C}_{A}(X,\alpha)|$ as expected by Theorem \ref{DowkerCech}. Now if $\pi:X\to A$ is the (only) projection map, then both triangles
\begin{equation*}
    \begin{tikzcd}
        \{\{x_1\}, \{x_2\}\} \arrow[r,"\pi"]\arrow[d,hook]
        & \{\{x_1\}\}\arrow[dl,hook]\\
        \{\{x_1\}, \{x_2\}, \{x_1,x_2\}\} &
    \end{tikzcd}
    \quad\text{and}\quad
    \begin{tikzcd}
        \{\{x_1\}, \{x_2\}, \{x_1,x_2\}\} \arrow[r,"\pi"]\arrow[d,hook]
        & \{\{x_1\}\}\arrow[dl,hook]\\
        \{\{x_1\}, \{x_2\}, \{x_1,x_2\}\} &
    \end{tikzcd}
\end{equation*}
commute up to contiguity, confirming the non-trivial $(0,\epsilon)$-interleavings from Corollary \ref{cor directed hausdorff}, namely the one between $\{H_{k}(\{x_1\})\}$ and $\{H_{k}(\mathscr{C}_{A}(X,\alpha))\}_{\alpha>0}$ and between $\{H_{k}(\{x_1\})\}$ and $\{H_{k}(\mathscr{C}_{X}(X,\alpha))\}_{\alpha>0}$. Finally, if $0<\alpha<\tau<\epsilon$ then $\mathscr{C}_X(X,\alpha)=\{\{x_1\}, \{x_2\}\}$ whose geometric realization obviously is homotopy equivalent to $X=\{x_1,x_2\}$, as predicted by Corollary \ref{C X X hom to X}. 
\end{example}

We proceed to look at an infinite space, in contrast to the finite space from Example \ref{example 1}. It is now possible to determine the homology of the space through the main result in Theorem \ref{mainthm}, which combines interleavings and reconstruction results.

\begin{example}[Uniform sample of the circle] 
Let $X = S^1\subseteq\mathbb{R}^2$ be the unit circle, which has reach $\tau = 1$ as $\unp(S^1)$ is all of $\mathbb{R}^2$ except the center. Furthermore, let $A_q=\{\xi^1,\xi^2,\dots,\xi^q\}\subseteq X$ be a uniform sampling consisting of the $q$ roots of unity, in particular $\xi^{q+1}=\xi^1$. By looking at the problem geometrically, one sees that the directed Hausdorff distance is $\overrightarrow{d_{H}}(S^1, A_q)=\sqrt{2-2\cos(\pi/q)}=2\sin(\pi/2q)$ and if $q\geq10$, then $\overrightarrow{d_{H}}(S^1, A_{q})<1/3=\tau/3$, which is needed to apply Theorem \ref{mainthm}. 

The distance between two consecutive points $\xi^i$ and $\xi^{i+1}$ is $2\sin(\pi/q)$ and in general $d(\xi^{i},\xi^{i+j})=2\sin(j\pi/q)$ whenever $j\leq q/2$. We note that $\sigma\in\mathscr{C}_{A_{q}}(A_{q},\alpha)$ if and only if $\sigma\subseteq \{\xi^{i-j},\dots,\xi^{i-1},\xi^{i},\xi^{i+1},\dots,\xi^{i+j}\}$ for some $j\leq q/2$ such that $d(\xi^{i},\xi^{i+j})<\alpha$ for some $\xi^{i}\in A_{q}$. 

For a concrete example, we let $q=14$ and we pick a radius between $2\overrightarrow{d_{H}}(S^1, A_{14}) \approx 0.45 $ and $2\tau/3=2/3$. For instance we can choose a radius $\alpha = 0.5$. Since $d(\xi^{i},\xi^{i+j})=2\sin (j\pi/14)\leq 0.5$ whenever $j\leq 1$, the \cech complex $\mathscr{C}_{A_{14}}(A_{14},0.5)$ is all subsets in $\{\xi^{i-1},\xi^{i},\xi^{i+1}\}$ for all $\xi^{i}\in A_{14}$. This has a homology group in dimension $1$ generated by the homology class represented by the 1-cycle $c:=\sum ^{14}_{i=1} (\xi^{i}, \xi^{i+1})$. If $\epsilon= 0.5=\tau-\alpha$, then since $2\sin(j\pi/14)<\epsilon+\alpha=1$ for $j\leq 2$, the \cech complex $\mathscr{C}_{A_{14}}(A_{14},1)$ is all subsets in $\{\xi^{i-2},\xi^{i-1},\xi^{i},\xi^{i+1},\xi^{i+2}\}$. We see that $c$ as defined above also represents a generator for the first homology group for this last complex, so the map $\phi^{0.5,1}_1:H_1(\mathscr{C}_{A_{14}}(A_{14},0.5))\to H_1(\mathscr{C}_{A_{14}}(A_{14},1))$ is the identity. 
The homology $H_1(\mathscr{C}_{A_{14}}(A_{14},0.5))$ has degree $1$, and Im$\,\phi^{0.5,1}_1$ is isomorphic to $H_1(S^1)$ by Theorem \ref{mainthm}. So we conclude that the first homology group of the circle $H_1(S^1)$ has degree $1$, as expected.

%induced by inclusion has one generator $[c]$, and in particular so does $H_1(S^1)$ by Theorem .
\end{example}

\section{Conclusion and future research}
We have shown that the homology of an unknown subspace of Euclidean space can be determined from a sample of points in the subspace, without reference to the ambient Euclidean space. More precisely, let $X$ be a subspace of Euclidean space, and let $A$ be a sample of points in $X$. Theorem \ref{mainthm} gives assumptions assuring that the $k$-th homology group of $X$ is isomorphic to the image of $\phi_k^{\alpha,\alpha+\epsilon}:H_k(\mathscr{C}_A(A,\alpha))\to H_k(\mathscr{C}_A(A,\alpha+\epsilon))$ induced by inclusion, for every $k\geq 0$.

In future work we intend to investigate conditions implying that there exists a subspace of the intrinsic \cech complex $\mathscr{C}_A(A,\alpha)$ which is homotopy equivalent to the unknown space $X$. Another line of future research is to replace the ambient Euclidean space by a general metric space $M$, where the first step is to transfer Federer's results on reach \cite{Federer} to $M$.

\bibliographystyle{plainurl}
\bibliography{bibliography}

\begin{thebibliography}{10}

\bibitem{CarlssonMachineLearning}
Rickard Brüel-Gabrielsson, Bradley~J. Nelson, Anjan Dwaraknath, Primoz Skraba,
  Leonidas~J. Guibas, and Gunnar Carlsson.
\newblock A topology layer for machine learning, 2020.
\newblock \href {http://arxiv.org/abs/1905.12200v2}
  {\path{arXiv:1905.12200v2}}.

\bibitem{chazal}
Fr{\'{e}}d{\'{e}}ric Chazal, Vin de~Silva, and Steve Oudot.
\newblock Persistence stability for geometric complexes, 2013.
\newblock \href {http://arxiv.org/abs/1207.3885} {\path{arXiv:1207.3885}}.

\bibitem{persistreconstruct}
Fr{\'{e}}d{\'{e}}ric Chazal and Steve Oudot.
\newblock Towards persistence-based reconstruction in euclidean spaces.
\newblock {\em Symposium on Computational Geometry}, page 232–241, 2008.
\newblock \href {https://doi.org/10.1145/1377676.1377719}
  {\path{doi:10.1145/1377676.1377719}}.

\bibitem{chowdhury}
Samir Chowdhury and Facundo Mémoli.
\newblock A functorial dowker theorem and persistent homology of asymmetric
  networks, 2018.
\newblock \href {http://arxiv.org/abs/1608.05432} {\path{arXiv:1608.05432}}.

\bibitem{structurethm}
William Crawley-Boevey.
\newblock Decomposition of pointwise finite-dimensional persistence modules,
  2014.
\newblock \href {http://arxiv.org/abs/1210.0819} {\path{arXiv:1210.0819}}.

\bibitem{Dowker}
C.~H. Dowker.
\newblock Homology groups of relations.
\newblock {\em Annals of Mathematics}, 56(1):84--95, 1952.
\newblock URL: \url{http://www.jstor.org/stable/1969768}.

\bibitem{edelsbrunner}
Herbert Edelsbrunner and John Harer.
\newblock {\em Computational Topology - an Introduction.}
\newblock American Mathematical Society, Providence, Rhode Island, 2010.

\bibitem{Federer}
Herbert Federer.
\newblock Curvature measures.
\newblock {\em Transactions of the American Mathematical Society, Vol. 93, No.
  3,pp. 418-491.}, Dec 1959.

\bibitem{Harker}
Shaun Harker, Miroslav Kramar, Rachel Levanger, and Konstantin Mischaikow.
\newblock A comparison framework for interleaved persistence modules.
\newblock {\em Applied and Computational Topology}, (3):85–118, 2019.
\newblock \href {https://doi.org/10.1007/s41468-019-00026-x}
  {\path{doi:10.1007/s41468-019-00026-x}}.

\bibitem{Hatcher}
Allen Hatcher.
\newblock {\em Algebraic topology}.
\newblock Cambridge University Press, Cambridge, 2002.

\bibitem{Wasserman}
Jisu Kim, Jaehyeok Shin, Frédéric Chazal, Alessandro Rinaldo, and Larry
  Wasserman.
\newblock Homotopy reconstruction via the cech complex and the vietoris-rips
  complex, 2020.
\newblock \href {http://arxiv.org/abs/1903.06955} {\path{arXiv:1903.06955}}.

\bibitem{NThm}
Jisu Kim, Jaehyeok Shin, Alessandro Rinaldo, and Larry Wasserman.
\newblock Nerve theorem on a positive reach set, 2019.
\newblock \href {http://arxiv.org/abs/1903.06955} {\path{arXiv:1903.06955}}.

\bibitem{Bio}
Violeta Kovacev-Nikolic, Peter Bubenik, Dragan Nikolić, and Giseon Heo.
\newblock Using persistent homology and dynamical distances to analyze protein
  binding.
\newblock {\em Statistical Applications in Genetics and Molecular Biology},
  15(1):19--38, 2016.
\newblock \href {https://doi.org/10.1515/sagmb-2015-0057}
  {\path{doi:10.1515/sagmb-2015-0057}}.

\bibitem{Smale}
P.~Niyogi, S.~Smale, and S.~Weinberg.
\newblock Finding the homology of submanifolds with high confidence from random
  samples.
\newblock {\em Discrete Comput. Geom.39:419-441}, Mar 2008.
\newblock \href {https://doi.org/10.1007/s00454-008-9053-2}
  {\path{doi:10.1007/s00454-008-9053-2}}.

\bibitem{Belen}
Belén~García Pascual.
\newblock Geometric reconstruction and persistence methods.
\newblock Master's thesis, University of Bergen, June 2020.
\newblock URL: \url{http://bora.uib.no/handle/1956/23118}.

\bibitem{Spanier}
Edwin~H. Spanier.
\newblock {\em Algebraic Topology}.
\newblock Springer-Verlag New-York, 1994.
\newblock \href {https://doi.org/10.1007/978-1-4684-9322-1}
  {\path{doi:10.1007/978-1-4684-9322-1}}.

\bibitem{Virk_2021}
Žiga Virk.
\newblock Rips complexes as nerves and a functorial dowker-nerve diagram.
\newblock {\em Mediterranean Journal of Mathematics}, 18(2), Feb 2021.
\newblock URL: \url{http://dx.doi.org/10.1007/s00009-021-01699-4}, \href
  {https://doi.org/10.1007/s00009-021-01699-4}
  {\path{doi:10.1007/s00009-021-01699-4}}.

\end{thebibliography}

\end{document}